\documentclass[10pt]{article}

\usepackage{amssymb} 
\usepackage{latexsym} 
\usepackage{amsmath}
\usepackage{mathtools} 
\usepackage{thmtools}
\usepackage{mathabx}
\usepackage{tabls} 
\usepackage{graphicx}
\usepackage{todonotes}
\usepackage{overpic}
\usepackage{subcaption}
\usepackage{tensor}
\usepackage{dutchcal}
\usepackage[english]{babel}
\usepackage{ntheorem}
\usepackage{a4wide}
\usepackage{caption}
\usepackage{tikz}
\usepackage[utf8]{inputenc}
\usepackage{color}
\usepackage{epsfig}
\usepackage{graphicx}
\usepackage[all]{xy}
\input xy
\usepackage[mathscr]{eucal} %use \mathsrc{X} to type eucal X
\usepackage{enumerate}
\usepackage{enumitem}
\usepackage{accents}
\usepackage{float}
\usetikzlibrary{graphs}

\newtheorem{theorem}{Theorem}

\newtheorem{remark}[theorem]{Remark}
\newtheorem{prop}[theorem]{Proposition}
\newtheorem{lemma}{Lemma}

\newtheorem{corollary}[theorem]{Corollary}

\newenvironment{proof}{\noindent{\bf Proof.}}%
{\hspace*{\fill}$\Box$\par\vspace{4mm}}

\newenvironment{theoremproof}[1]{\par\noindent\textbf{Proof of Theorem}\space\textbf{#1.}\!}{\hfill $\Box$\mybreak\noindent}

\newenvironment{lemmaproof}[1]{\par\noindent\textbf{Proof of Lemma}\space\textbf{#1.}\!}{\hfill $\Box$\mybreak\noindent}

\DeclareCaptionFormat{ruled}{
	#1#2#3\par\vspace{-.65\baselineskip}\hrulefill\par\vspace{-.83\baselineskip}\hrulefill}
\def\cadre{$$\vcenter\bgroup\advance\hsize by -2em\noindent
	\refstepcounter{equation}(\theequation)~\ignorespaces}
\makeatletter
\def\endcadre{\egroup\eqno$$\global\@ignoretrue}

\newcommand{\mybreak} {\par\vspace{2mm}\noindent}
\newcommand\mydfrac[2]{{\displaystyle\frac{#1}{#2}}}

\makeatletter
\def\imod#1{\allowbreak\mkern10mu({\operator@font mod}\,\,#1)}
\makeatother

\newcommand{\comment}[1]{}

\newcommand{\kn}[2] {#1^{\underline{#2}}}
\newcommand{\kns}[3]{#1_{#2}^{\kern0.1em\underline{#3}\phantom{#2}}}

\newcommand{\col} {\mathbf{c}}

\newcommand{\Pm}{\mathbb{P}}
\newcommand{\pra}[1] {\Pm\left(#1\right)}
\newcommand{\uno} {\mathsf{1}}

\newcommand{\E} {\mathbb{E}}
\newcommand{\V} {{\rm var}}
\newcommand{\Gu} {\mathbb{G}}

\newcommand{\G} {\mathcal{G}}

\newcommand{\Ea}[1] {\E\left(#1\right)}

\newcommand{\var}[1] {\V\left(#1\right)}
\newcommand{\size}[1] {\mathsf{e}(#1)}
\newcommand{\cut} {\mathsf{e}}

\newcommand{\Inj}{{\sf Inj}}

\newcommand{\range}{{\sf im}}

\usepackage{tikz}
\usetikzlibrary{shapes.geometric}
\tikzset{gon/.style={name=tmp,regular polygon,regular polygon sides=#1,minimum
		size=10pt,inner sep=0pt},
	polygon side/.style args={#1--#2}{
		insert path={(tmp.corner #1)-- (tmp.corner #2)}}}
\newcommand{\FlagGraph}[3][]{\ifnum#2=2%
	\tikz[baseline=(tmp1)]{\node[circle,inner sep=0.7pt,fill] (tmp1) at (0,0){};
		\node[#1,circle,inner sep=0.7pt,fill] (tmp2) at (0,10pt){};
		\ifx#3\empty%
		\else
		\draw[#1] (tmp1) -- (tmp2);
		\fi}
	\else%
	\tikz[baseline=(tmp.south)]{\node[#1,gon=#2]{};
		\foreach \X in {1,...,#2}{\fill (tmp.corner \X) circle (1pt);}
		\draw[#1,polygon side/.list={#3}]}
	\fi}

\setlist[itemize]{labelindent=\parindent, leftmargin=0.4cm}

\title{A universal dichotomy for concentration in randomly colored graphs.} 
\author{AA}
\date{}
\author{Nicola Apollonio\footnote{Consiglio Nazionale delle Ricerche, Istituto per le Applicazioni del
		Calcolo, M. Picone, Via dei Taurini 19, 00185 Roma, Italy
		\texttt{nicola.apollonio@cnr.it}}
}
\begin{document}

	\maketitle		
		\begin{abstract}
		We establish necessary and sufficient conditions for the concentration of counts of monochromatic and bichromatic edges in both randomly colored graphs and randomly colored random graphs. Let $\zeta$ be Euclidean norm of the degree sequence of a graph normalized by the graph size. We prove that when the vertices are randomly colored with $s$ colors such that the fraction of vertices in each color class is bounded away from zero, only two asymptotic regimes emerge. If $\zeta=o(1)$, then the sizes of the subgraphs induced by the color classes concentrate around their expected values. If $\zeta=\Theta(1)$, then concentration depends on the color balance: the total number $M$ of monochromatic edges concentrates if and only if the distribution of colors in the color classes becomes asymptotically uniform while for colorings with persistent imbalance, $M$ stays bounded away from its mean with positive probability. The same dichotomy holds for a broad class of randomly colored random graphs. 
	\end{abstract}

{\small\textbf{Keywords}: random coloring, elementary symmetric polynomials, random graphs, uniform integrability.}
	\pagestyle{plain}
\section{Introduction}
Random colorings with prescribed class sizes arise in combinatorics and in applications such as network homophily and community detection. Let $G$ have $n$ vertices and $m$ edges, and let $\col=(c_1,\ldots,c_s)$ be a given composition of $n$ into $s$ parts with $2\leq s\leq n$. A $\col$-coloring of $V(G)$ is a map $f:V(G)\to[s]$ satisfying $|f^{-1}(i)|=c_i$ for every $i$. We equip the set of all $\col$-colorings of $V(G)$ with the uniform measure
$$\Pm_\col(f)=\mydfrac{c_1!\cdots c_s!}{n!},$$
and denote the resulting probability space by $\mathsf{R}_{\col,n}$. For distinct $i,j\in[s]$, let $M_i(G)$ count the edges with both endpoints of color $i$, and let $M_{i,j}(G)$ count those with endpoint colors $i$ and $j$. Thus
\begin{align}
	M_i(G)&=\sum_{uv\in E(G)}\uno\left(f(u)=f(v)=i\right)\label{eq:M_i}\\
	M_{i,j}(G)&=\sum_{uv\in E(G)}\left\{\uno(f(u)=i, f(v)=j)+\uno(f(u)=j,f(v)=i)\right\}\label{eq:M_{i,j}}
\end{align}
where $\uno(B)$ denotes the indicator of event $B$.
 
The random variable $M_i(G)$ is the size of a subgraph of $G$ induced by a randomly chosen set of $c_i$ vertices. It is an interesting combinatorial object in its own right. For instance, deciding whether the support of $M_i(G)$ contains $0$ is equivalent to deciding whether $G$ has an independent set with at least $c_i$ vertices and ${n\choose c_i}\pra{M_i(G)=0}$ is the $c_i$-th coefficient of the independence polynomial $I(G,x)$. The existence of proper vertex-colorings and clique partitions of $G$ can be read as events in $\mathsf{R}_{\col,n}$: $(M_1(G)=0,\ldots, M_s(G)=0)$ and $(M_1(G)={c_1\choose 2},\ldots, M_s(G)={c_1\choose 2})$, respectively. Moreover,  
$$\lim_{n\rightarrow \infty}\max\left\{\pra{M_1(G)={c_1\choose 2}-1} \ |\ \text{$G$ has $n$ vertices}\right\},$$
is the \emph{inducibility} of $K_{c_1}-e$ (\cite{pippago,LMR}) studied in depth in \cite{hirst,LMR} and the Celebrated Edge Statistic Theorem 	\cite{alonetal,KST,FoxSau, MMNT} gives a sharp universal upper bound on $\pra{M_1(G)=k}$ when the number of vertices of $G$ is sufficiently large.

The random vector $(M_1(G),\ldots,M_s(G))$ plays a role in the context of Network Homophily \cite{mac,newman}. In this context, one observes a network partitioned into distinguishable clusters and compares it with a random benchmark obtained by sampling without replacement. Assessing homophily amounts to performing a statistical test on the tails of appropriate scalar functions of the random vector $(M_1(G),\ldots,M_s(G))$ \cite{apofrasa1}.
 
In this paper, we study weak laws of large numbers for the random matrix $\boldsymbol{M}(G)$ defined by
\[
\boldsymbol{M}_{i,j}(G)=\begin{cases}
	M_i(G) & \text{if $i=j$}\\
	M_{i,j}(G) & \text{if $i\neq j$}
\end{cases}
\]
and the related statistics 
\begin{equation}\label{eq:M_and_L}
	M(G)=\sum_{i=1}^sM_i(G)\quad\text{and}\quad L(G)=m-M(G)
\end{equation}
the numbers of monochromatic and bichromatic edges, respectively. Note that 
$$M(G)+L(G)=m.$$
We show that the asymptotic behavior of these statistics is governed by the parameter 
$$\zeta(G)=\mydfrac{\sqrt{\Sigma_2(G)}}{m}$$ 
where $\Sigma_2(G)$ is the sum of squared degrees of $G$. As shown by the following theorem when $\zeta(G)=o(1)$, all edge counts concentrate around their expectations for every color distribution in which the fraction of the vertices in each color class is bounded away from zero. When $\zeta(G)=\Theta(1)$, the total monochromatic count concentrates exactly in the asymptotically balanced regime, while persistent imbalance yields anti-concentration. Here and throughout the rest of the paper we write $X_n \xrightarrow{P} X$ to mean that the sequence of random variables $(X_n)_{n\geq 1}$, tacitly defined on the same probability space, converges to $X$ in probability. Also, when we say that $(X_n)_{n\geq 1}$ (relatively) concentrates, we mean that $X_n/\Ea{X_n}\xrightarrow{P}1$. Denote by $\size{G}$ the number of edges of $G$.  
	\begin{theorem}\label{thm:main}
	Let $\left(G_n\right)_{n\geq 1}$ be a sequence of randomly $\col_n(s)$-colored graphs of order $n$ and size $\size{G_n}=m_n$ where $\col_n(s)=(c_{1,n},\ldots,c_{s,n})$ and $\sum_{i=1}^sc_{i,n}=n$. For $i=1,\ldots,s$ let $\gamma_{i,n}=c_{i,n}/n$ and   $$\boldsymbol{\gamma}_n=(\gamma_{1,n},\ldots,\gamma_{s,n})$$ be the \emph{color-distribution vector}. Also let $\zeta_n=\zeta(G_n)$, $\boldsymbol{\upsilon}_s=s^{-1}\mathbf{1}_s$ and $\widebar{\boldsymbol{M}}(G_n)$ be the matrix defined by
	\[
	\widebar{\boldsymbol{M}}_{i,j}(G_n)=\begin{cases}
		\Ea{M_i(G_n)} & \text{if $i=j$}\\
		\Ea{M_{i,j}(G_n)} & \text{if $i\neq j$}
	\end{cases}
	\]
	If 
	$$\liminf_n\min_{i\in [s]}\gamma_{i,n}>0\quad\text{and}\quad m_n\to\infty$$ 
	then the following hold.
	\begin{enumerate}[label={\rm (\roman*)}]
		\item\label{com:i} If $\zeta_n=o(1)$, then, for every $i,j\in[s]$,
		$$\frac{\boldsymbol{M}_{i,j}(G_n)}{\widebar{\boldsymbol{M}}_{i,j}(G_n)}\xrightarrow{P}1.$$
		Hence, in particular, both $L(G_n)$ and $M(G_n)$ concentrate. 
		\item\label{com:ii} If $\zeta_n=\Theta(1)$, then
		\[
		\frac{M(G_n)}{\Ea{M(G_n)}}\xrightarrow{P}1
		\quad\text{and}\quad
		\frac{L(G_n)}{\Ea{L(G_n)}}\xrightarrow{P}1
		\quad\Longleftrightarrow\quad
		\|\boldsymbol{\gamma}_n-\boldsymbol{\upsilon}_s\|\to 0.
		\]
		Moreover, if $\liminf_n\|\boldsymbol{\gamma}_n-\boldsymbol{\upsilon}_s\|>0$, then there exist constants $\epsilon,\delta>0$ such that
		\[
		\liminf_{n\to\infty}\pra{|M(G_n)-\Ea{M(G_n)}|>\epsilon m_n}\geq\delta.
		\]
		The same estimate holds for $L(G_n)$. The implication 
		\[
		\|\boldsymbol{\gamma}_n-\boldsymbol{\upsilon}_s\|\to 0\Longrightarrow\quad\frac{M(G_n)}{\Ea{M(G_n)}}\xrightarrow{P}1
		\quad\text{and}\quad
		\frac{L(G_n)}{\Ea{L(G_n)}}\xrightarrow{P}1
		\quad
		\]
		remains valid without the assumption $\zeta_n=\Theta(1)$.
	\end{enumerate}
\end{theorem}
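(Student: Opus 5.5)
We will work throughout with second moments and Chebyshev's inequality, and supplement them with a structural decomposition for the anti-concentration half. \textbf{Step 1 (moments).} For a uniform $\col$-coloring, the pair probabilities are exact: $\pra{f(u)=f(v)=i}=\kn{c_i}{2}/\kn{n}{2}$, and the mixed probabilities for two or three or four vertices are ratios of falling factorials. Hence $\Ea{M_i}=m\,\kn{c_i}{2}/\kn{n}{2}\sim m\gamma_i^2$ and $\Ea{M_{i,j}}\sim 2m\gamma_i\gamma_j$. Both are $\Theta(m)$ under $\liminf\min_i\gamma_{i,n}>0$. Next we expand $\var{M_i}$ over ordered pairs of edges, split according to whether the two edges are equal, share one vertex, or are disjoint. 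The number of pairs sharing a vertex is $\sum_v\binom{d_v}{2}\le\Sigma_2/2$. For disjoint pairs, the covariance of the two indicators is $O(1/n)$ (sampling without replacement). Its leading part, however, has a definite sign and coefficient. A cleaner route is to write
\[
\uno(f(u)=i)=\gamma_i+\xi_u^{(i)},
\]
so that $M_i=\sum_{uv}(\gamma_i+\xi_u)(\gamma_i+\xi_v)$. This is a degree-2 polynomial in exchangeable centered variables with $\sum_u\xi_u=0$. The linear part is $\gamma_i\sum_u d_u\xi_u$, whose variance is
\[
\asymp \gamma_i(1-\gamma_i)\sum_u (d_u-\bar d)^2\le \Sigma_2 .
\]
The quadratic part $\sum_{uv\in E}\xi_u\xi_v$ has variance $O(m+\Sigma_2/n)=O(m)$. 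Hence $\var{M_{i}}=O(\Sigma_2+m)$, and similarly for $M_{i,j}$. Since $\Sigma_2\ge 4m^2/n$ and $m\to\infty$, we get $m=o(m^2)$ and $\Sigma_2=\zeta^2m^2$. Dividing by $\Ea{\cdot}^2=\Theta(m^2)$ gives $O(\zeta_n^2+1/m_n)\to0$, and Chebyshev proves (i).

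\textbf{Step 2 (balanced case, any $\zeta$).} We sum the linear parts over colors. In $M$ the linear term is
\[
2\sum_u d_u\sum_i\gamma_i\xi_u^{(i)}=2\sum_u d_u\sum_i(\gamma_i-1/s)\xi^{(i)}_u,
\]
because $\sum_i\xi_u^{(i)}=0$. Its variance is therefore $O(\|\boldsymbol\gamma_n-\boldsymbol\upsilon_s\|^2\Sigma_2)=o(m^2)$, using $\Sigma_2\le 2m\cdot\max_v d_v\le m^2\cdot O(1)$. The quadratic part has variance $O(m)=o(m^2)$. So $\var{M}=o(\Ea{M}^2)$, and $M$ concentrates. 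Since $\Ea{L}\sim m(1-\sum\gamma_i^2)=\Theta(m)$ and $L-\Ea{L}=-(M-\Ea{M})$, $L$ concentrates too. This is the last assertion of (ii), and it also gives the direction ``$\Leftarrow$'' of the equivalence.

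\textbf{Step 3 (imbalance, $\zeta=\Theta(1)$): the main obstacle.} Here the linear term
\[
Y=2\sum_u d_u\eta_u,\qquad \eta_u=\sum_i(\gamma_i-1/s)\xi_u^{(i)},
\]
has variance $\Theta(m^2)$. We must show that $Y$, and not merely its variance, stays away from $0$. We first estimate this variance: it equals $\Theta(\sum_u(d_u-\bar d)^2)$, where the factor is the variance of $\sum_i(\gamma_i-1/s)\uno(f(u)=i)$, which is bounded below when $\boldsymbol\gamma$ is not uniform. The correction $n\bar d^2=4m^2/n$ is $o(m^2)$ whenever $n\to\infty$, so the variance is $\Theta(\Sigma_2)=\Theta(m^2)$. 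Variance alone does not yield anti-concentration. The plan is to use the fourth moment and Paley--Zygmund:
\[
\pra{Y^2>\tfrac12\Ea{Y^2}}\ge\tfrac14\Ea{Y^2}^2/\Ea{Y^4},
\]
with $\Ea{Y^4}=O(\Sigma_2^2)$. For this bound, the terms in $\sum d_ud_vd_wd_x$ with distinct vertices carry only $O(n^{-2})$ mixed moments, and the diagonal terms give $O(\sum_u d_u^4+\Sigma_2^2)$, which is $O(m^4)$. The $O(m)$ quadratic remainder and the deterministic $o(m)$ shift in the mean are then negligible. This yields $\pra{|M-\Ea{M}|>\epsilon m}\ge\delta$, which contradicts concentration and completes the direction ``$\Rightarrow$''. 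The corresponding statement for $L$ follows because $L-\Ea{L}=-(M-\Ea{M})$. Controlling the exchangeable fourth-moment cross terms is the delicate computation. If it proves messy, we fall back on a subsequence argument: pass to subsequences along which a few high-degree vertices carry a positive fraction of $\Sigma_2$, and use the fact that their colors are asymptotically independent and non-degenerate.
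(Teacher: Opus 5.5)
Your proposal is correct, but it reaches the result by a different route from the paper at each of its three stages.

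\textbf{Variance.} The paper computes all second moments exactly (Lemma~\ref{lem:counting}, Theorem~\ref{thm:moments}). It then uses Newton's identities to show that the coefficient of $\Sigma_2$ in $\sigma^2(G_n)$ is $a_n-b_n=P_3(\boldsymbol{\gamma}_n)-P_2(\boldsymbol{\gamma}_n)^2+O(n^{-1})$. On the relevant compact part of the simplex this coefficient vanishes only at $\boldsymbol{\upsilon}_s$, which the paper proves by Cauchy--Schwarz plus continuity.

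Your centering $\uno(f(u)=i)=\gamma_i+\xi^{(i)}_u$ reaches the same leading term with no bookkeeping. The linear part of $M$ has variance $\rho_n\frac{n}{n-1}\sum_u(d_u-\bar d)^2$, where $\rho_n=\sum_i\gamma_{i,n}(\gamma_{i,n}-P_2)^2=P_3-P_2^2$ is the variance of $\gamma_{f(u)}$. This gives directly $(\min_i\gamma_{i,n})\|\boldsymbol{\gamma}_n-\boldsymbol{\upsilon}_s\|^2\le\rho_n\le\|\boldsymbol{\gamma}_n-\boldsymbol{\upsilon}_s\|^2$. That makes the dichotomy more transparent than the paper's compactness argument. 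What you give up are the exact finite-$n$ formulas. One small slip: the linear term is $\sum_u d_u\sum_i\gamma_i\xi^{(i)}_u$, with no factor $2$.

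\textbf{Anti-concentration.} Your remark that variance alone does not give anti-concentration is mistaken in this setting, and the paper exploits exactly this. Set $Z_n=|M-\Ea{M}|/m_n$; then $Z_n\in[0,1]$. So $\Ea{Z_n}\ge\Ea{Z_n^2}\ge c$, combined with $\Ea{Z_n}\le\epsilon+\pra{Z_n>\epsilon}$, yields $\pra{Z_n>c/2}\ge c/2$. Equivalently, $\E\bigl((M-\Ea{M})^4\bigr)\le m_n^2\var{M}$, so Paley--Zygmund needs no mixed-moment computation at all.

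Your bound $\E(Y^4)=O(\Sigma_2^2)$ does in fact hold. The contributions with a partial or no index coincidence carry the factors $O(n^{-1})$ or $O(n^{-2})$, and they are absorbed using $\Sigma_2\ge 4m^2/n$ and $\Sigma_2\ge 2m$. So your route works, but the ``delicate computation'' and the fallback are unnecessary.

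\textbf{Necessity.} The paper turns concentration into $\var{M}/m_n^2\to0$ through the uniform-integrability Lemma~\ref{lem:convpro}, and then reads off $\rho_n\to0$. You argue by contraposition via anti-concentration. This is equally valid and more elementary, but you should say explicitly how it starts. If $\|\boldsymbol{\gamma}_n-\boldsymbol{\upsilon}_s\|\not\to0$, first pass to a subsequence with $\liminf\|\boldsymbol{\gamma}_n-\boldsymbol{\upsilon}_s\|>0$. Along it $\rho_n$ is bounded below, and your anti-concentration estimate then rules out $M/\Ea{M}\xrightarrow{P}1$.
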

The following Law of Large Numbers for balanced graph partitions in dense graphs is a straightforward, yet representative, consequence of Theorem \ref{thm:main} and Theorem \ref{thm:moments}. 
\begin{corollary}\label{coro:szmeredy_type}
	Let $(G_n)_{n\geq 1}$ be a sequence of graphs of order $n$, size $\size{G_n}=m_n$, and edge density
	$$\frac{2m_n}{n(n-1)}\to p>0.$$ 
	Fix $s\geq 2$ and let $f$ be any coloring sampled from $\mathsf{R}_{\col_n,n}$ where 
	$$c_{i,n}=n/s+o(n),\quad i=1,\ldots,s$$ 
	and let $\bigsqcup_{i=1}^s V_i$ be the partition induced by $f$. Then, for distinct $i,j\in[s]$,
	$$\frac{\size{G_n[V_i]}}{\binom{c_{i,n}}{2}}=p+o_P(1)\quad\text{and}\quad\frac{\cut_{G_n}(V_i,V_j)}{c_{i,n}c_{j,n}}=p+o_P(1)$$
	where $\cut_{G_n}(V_i,V_j)$ is the number of edges between $V_i$ and $V_j$. Thus every block and every cut has edge density $p+o_P(1)$ with high probability.
\end{corollary}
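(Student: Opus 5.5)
The plan is to reduce the corollary to part (i) of Theorem~\ref{thm:main} by checking that dense graphs have $\zeta_n=o(1)$, and then to compute the expectations $\Ea{M_i(G_n)}$ and $\Ea{M_{i,j}(G_n)}$ explicitly.

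\textbf{Checking the hypotheses.} Since $c_{i,n}=n/s+o(n)$, we have $\gamma_{i,n}\to 1/s$, so $\liminf_n\min_i\gamma_{i,n}=1/s>0$. Also $m_n\sim pn^2/2\to\infty$. Every degree is at most $n-1$, so
$$\Sigma_2(G_n)\le (n-1)\sum_v d(v)=2(n-1)m_n .$$
Therefore
$$\zeta_n^2\le \frac{2(n-1)}{m_n}=O(1/n)\to 0 .$$
Hence we are in regime (i) of Theorem~\ref{thm:main}, which gives $\boldsymbol{M}_{i,j}(G_n)/\widebar{\boldsymbol{M}}_{i,j}(G_n)\xrightarrow{P}1$ for all $i,j$. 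Note that (ii) is not needed; only (i) is used.

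\textbf{Computing the expectations.} This is the role of Theorem~\ref{thm:moments}, but it is elementary here. For a fixed edge $uv$, the pair $\{u,v\}$ is a uniformly random $2$-subset relative to the coloring. Hence
$$\Pm(f(u)=f(v)=i)=\frac{c_i(c_i-1)}{n(n-1)}\quad\text{and}\quad \Pm(\{f(u),f(v)\}=\{i,j\})=\frac{2c_ic_j}{n(n-1)}.$$
Summing over the edges gives
$$\Ea{M_i(G_n)}=m_n\binom{c_{i,n}}{2}\Big/\binom n2,\qquad \Ea{M_{i,j}(G_n)}=m_n\,\frac{c_{i,n}c_{j,n}}{\binom n2}.$$

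\textbf{Conclusion.} Now $\size{G_n[V_i]}=M_i(G_n)$ and $\cut_{G_n}(V_i,V_j)=M_{i,j}(G_n)$. Dividing by the expectations above, we get
$$\frac{\size{G_n[V_i]}}{\binom{c_{i,n}}2}=\frac{m_n}{\binom n2}\cdot\frac{M_i}{\Ea{M_i}}=\bigl(p+o(1)\bigr)\bigl(1+o_P(1)\bigr)=p+o_P(1).$$
The same computation gives $\cut_{G_n}(V_i,V_j)/(c_{i,n}c_{j,n})=p+o_P(1)$.

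There is no real obstacle; the only point needing care is the bound $\zeta_n\to0$. This bound uses $m_n=\Theta(n^2)$ together with the trivial degree bound. The expectations are nonzero because $c_{i,n}\to\infty$, so the ratios are well defined.
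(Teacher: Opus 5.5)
Your proposal is correct and follows the paper's argument. The paper likewise bounds $\zeta_n^2\le 2(n-1)/m_n=o(1)$ (as in Remark~\ref{rem:sum_of_squares_order}), applies part (i) of Theorem~\ref{thm:main}, and takes the expectations from Theorem~\ref{thm:moments}; your rescaling $M_i/\binom{c_{i,n}}{2}=\frac{m_n}{\binom{n}{2}}\cdot M_i/\Ea{M_i}$ only makes explicit a step the paper leaves implicit.
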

%	\begin{proof}
%		If $p>0$, then $m_n=\Theta(n^2)$ and
%		$$\zeta_n^2=\frac{\Sigma_2(G_n)}{m_n^2}\leq\frac{2(n-1)}{m_n}=o(1),$$
%		so the conclusion follows from Theorem \ref{thm:main}. 
%	\end{proof}
This results, can be stated more generally, even for sparser graphs. In this form, it shows that sampling a balanced partition yields, with high probability, induced subgraphs and cuts of density $p+o_P(1)$, as in the binomial random graph $\Gu(n,p)$.

In Theorem \ref{thm:main}, the assumption $\liminf_n\min_{i\in [s]}\gamma_{i,n}>0$ excludes degenerate color classes, while the assumption $m_n\to\infty$ excludes concentration caused only by a vanishing number of edges. Moreover, $\zeta_n=o(1)$ holds for bounded maximum degree or for $m_n=\omega(n)$, whereas $\zeta_n=\Theta(1)$ implies $\Delta(G_n)=\Theta(m_n)$ and $m_n=O(n)$ (see Remark \ref{rem:sum_of_squares_order}).

The heart of the proof of Theorem \ref{thm:main} is the exact computation of the second moment structure of the edge counts and the exact asymptotics of the leading coefficient of the asymptotic variance. Applying these results, along with some additional probabilistic tool (see Lemma \ref{lem:convpro}), shows that for a rather large class of random graphs, Theorem \ref{thm:main} lifts to random graphs in the following sense.
\begin{theorem}\label{thm:rdmain}
	Let $(\G_n)_{n\geq 1}$ be a sequence of random simple graphs on $[n]$. Independently of $\G_n$, sample a coloring from $\mathsf{R}_{\col_n(s),n}$, where the color-distribution vector $\boldsymbol{\gamma}_n=n^{-1}\col_n(s)$ is bounded away from both the boundary and the center of the standard $s$-dimensional simplex $\boldsymbol{\Delta}_s$. Let $\size{\G_n}$ be the size of $\G_n$, and define $\zeta(\G_n)=0$ on the event $\{\size{\G_n}=0\}$. Assume that
	\begin{enumerate}[label={\rm (*)}] 
		\item\label{com:ast} $\E(\size{\G_n})\to\infty$ and $\size{\G_n}/\Ea{\size{\G_n}}\to 1$ in quadratic mean.
	\end{enumerate}
	Then
	$$\frac{M(\G_n)}{\Ea{M(\G_n)}}\xrightarrow{P}1$$
	if and only if
	\begin{equation}\label{eq:eqpsuffcond} 
		\mydfrac{\Ea{\Sigma_2(\G_n)}}{\left[\Ea{\size{\G_n}}\right]^2}\to 0
	\end{equation}
	and, moreover, this condition is equivalent to
	\begin{equation}\label{eq:psuffcond} 
		\zeta(\G_n)\xrightarrow{P} 0.
	\end{equation}
\end{theorem}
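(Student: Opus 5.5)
We condition on $\G_n$ and work with the conditional law of $M$, which is exactly the deterministic setting of Theorem \ref{thm:main} applied to the realized graph.

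\emph{Conditional moments.} For a fixed graph $G$ with $m$ edges, $\Ea{M(G)}=m\,p_n$, where
$$p_n=\sum_i\frac{c_{i,n}(c_{i,n}-1)}{n(n-1)}=\|\boldsymbol{\gamma}_n\|^2+o(1).$$
This quantity depends on $G$ only through $m$. Hence $\Ea{M(\G_n)}=p_n\Ea{\size{\G_n}}$, and $M(\G_n)/\Ea{M(\G_n)}=\frac{M(\G_n)}{p_n\size{\G_n}}\cdot\frac{\size{\G_n}}{\Ea{\size{\G_n}}}$. By \ref{com:ast} the second factor tends to $1$ in probability. So the problem reduces to showing that $R_n:=M(\G_n)/(p_n\size{\G_n})\xrightarrow{P}1$ if and only if \eqref{eq:eqpsuffcond} holds.

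From the exact second moment computation used for Theorem \ref{thm:main}, I would take the variance formula in the form
$$\V(M(G))=A_n\Sigma_2(G)+B_n m^2/n+\text{lower order},$$
where the leading coefficient $A_n$ is proportional to $\sum_i\gamma_{i,n}^3-(\sum_i\gamma_{i,n}^2)^2$ (up to $o(1)$). This coefficient vanishes only at the center of the simplex. Since $\boldsymbol{\gamma}_n$ stays away from the center and the boundary, $A_n\ge a>0$. Using $\Sigma_2\ge 4m^2/n$ and $\Sigma_2\le 2m\Delta$, one gets $c\,\zeta(G)^2\le \V(M(G))/m^2\le C(\zeta(G)^2+1/m)$. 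The mean-of-squares quantity $\zeta$ thus controls the conditional variance from both sides.

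\emph{Sufficiency.} Assume \eqref{eq:eqpsuffcond}. By conditional Chebyshev on $\{\size{\G_n}\ge \tfrac12\Ea{\size{\G_n}}\}$, an event of probability $\to1$ by \ref{com:ast},
$$\Pm(|R_n-1|>\epsilon)\le o(1)+\E\!\left[\min\!\Big(1,\frac{C\Sigma_2(\G_n)}{\epsilon^2p_n^2\size{\G_n}^2}\Big)\right]\le o(1)+\frac{4C\,\Ea{\Sigma_2(\G_n)}}{\epsilon^2p_n^2[\Ea{\size{\G_n}}]^2}\to0.$$
The $1/m$ term is handled the same way because $\Ea{\size{\G_n}}\to\infty$.

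\emph{Equivalence of \eqref{eq:eqpsuffcond} and \eqref{eq:psuffcond}.} If \eqref{eq:eqpsuffcond} holds, Markov's inequality gives $\Sigma_2(\G_n)/[\Ea{\size{\G_n}}]^2\xrightarrow{P}0$. Combining this with $\size{\G_n}/\Ea{\size{\G_n}}\xrightarrow{P}1$ yields \eqref{eq:psuffcond}.

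The converse is the delicate direction and the main obstacle: convergence in probability of $\zeta$ does not by itself control the expectation. Here I would use Lemma \ref{lem:convpro}, which I expect supplies a uniform integrability argument. Note that $\Sigma_2(G)\le 4m^2$, so $\Sigma_2(\G_n)/[\Ea{\size{\G_n}}]^2\le 4(\size{\G_n}/\Ea{\size{\G_n}})^2$. The right-hand side converges in $L^1$ by the quadratic mean hypothesis in \ref{com:ast}, so the family is uniformly integrable. Convergence in probability to $0$ then upgrades to convergence of means, giving \eqref{eq:eqpsuffcond}.

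\emph{Necessity.} Suppose $R_n\xrightarrow{P}1$ but \eqref{eq:psuffcond} fails. Then along a subsequence $\Pm(\zeta(\G_n)>\eta)\ge\delta$. Conditionally on such graphs, the anti-concentration part of Theorem \ref{thm:main}\ref{com:ii} applies, since $\boldsymbol{\gamma}_n$ is bounded away from the center. To make it usable here, I would use a version that is uniform in graphs with $\zeta(G)>\eta$ and $m$ large. The theorem is proved via the lower variance bound together with an upper bound on the fourth moment, in Paley--Zygmund style, so the constants should be uniform. This gives $\Pm(|M(G)-p_nm|>\epsilon m)\ge\delta'$ uniformly for such $G$. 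Integrating over $\G_n$ shows $\Pm(|R_n-1|>\epsilon)\ge\delta\delta'/2$, which is a contradiction. Securing this uniformity of the constants is the point I expect to require the most care.
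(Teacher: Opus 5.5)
Your argument is correct in substance, but the necessity direction takes a genuinely different route from the paper's. Sufficiency is essentially the paper's argument: you apply Chebyshev conditionally on $\{\size{\G_n}\ge\mu_n/2\}$, where $\mu_n=\Ea{\size{\G_n}}$, while the paper divides the law-of-total-variance identity by $\mu_n^2$.

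\textbf{The two routes for necessity.} The paper stays at the level of unconditional moments. From $M(\G_n)/\Ea{M(\G_n)}\xrightarrow{P}1$, the domination $0\le M(\G_n)\le\size{\G_n}$ and $\size{\G_n}/\mu_n\to1$ in $L^2$, the second part of Lemma \ref{lem:convpro} (uniform integrability) gives $\var{M(\G_n)}/\mu_n^2\to0$. The exact identity $\var{M(\G_n)}=A_n\Ea{\Sigma_2(\G_n)}+B_n\Ea{\size{\G_n}^2}+C_n\mu_n+q_n^2\var{\size{\G_n}}$ then forces $\Ea{\Sigma_2(\G_n)}/\mu_n^2\to0$, because $A_n$ is bounded away from zero. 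Only afterwards is \eqref{eq:psuffcond} derived.

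You instead prove concentration $\Rightarrow$ \eqref{eq:psuffcond} pathwise, by contradiction with a uniform form of the anti-concentration in Theorem \ref{thm:main}(ii). You then get \eqref{eq:psuffcond} $\Rightarrow$ \eqref{eq:eqpsuffcond} by uniform integrability.

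The paper's route is shorter and needs no conditional tail estimate. Yours gives more information: on $\{\zeta(\G_n)>\eta\}$ the monochromatic count deviates from its conditional mean by order $\size{\G_n}$ with probability bounded below. So the random dichotomy becomes a literal consequence of the deterministic one. Your direct domination of $\Sigma_2(\G_n)/\mu_n^2$ by a multiple of $(\size{\G_n}/\mu_n)^2$ is also slightly cleaner than the paper's use of Lemma \ref{lem:convpro} with $Y_n=\size{\G_n}^2$. It uses exactly the uniform integrability that assumption (*) provides.

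\textbf{Two corrections, neither fatal.}

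First, the global lower bound $c\,\zeta(G)^2\le\V(M(G))/\size{G}^2$ is false. For $K_n$ the count $M$ is deterministic, while $\zeta(K_n)^2=4/n$. Use the paper's normalization $\V(M(G))=A_n\Sigma_2(G)+B_n\size{G}^2+C_n\size{G}$. Evaluating this at $K_n$ gives $B_n=-4A_n/n+O(n^{-2})$. Hence $A_n\Sigma_2(G)+B_n\size{G}^2$ is essentially $A_n\bigl(\Sigma_2(G)-4\size{G}^2/n\bigr)$, which measures the spread of the degree sequence rather than $\Sigma_2(G)$ itself. What you actually use is weaker and true: $\V(M(G))/\size{G}^2=A_n\zeta(G)^2+B_n+C_n/\size{G}\ge a\eta^2/2$ whenever $\zeta(G)>\eta$ and $n$ and $\size{G}$ are large. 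Restrict the displayed inequality to that regime.

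Second, the uniformity you flag needs no fourth-moment bound. The paper's anti-concentration step uses only that $Z=|M(G)-\Ea{M(G)}|/\size{G}\in[0,1]$. Then $\Ea{Z}\ge\Ea{Z^2}\ge c$ and $\Ea{Z}\le c/2+\pra{Z>c/2}$ give $\pra{Z>c/2}\ge c/2$. Here $c$ depends only on the variance lower bound above. The constant is therefore uniform over all graphs $G$ on $[n]$ with $\zeta(G)>\eta$ and $\size{G}\ge m_0$, for $n$ large.
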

 We note that assumption \ref{com:ast}, causes loss of generality. However, rewriting it as
$$\var{\size{\G_n}}=o\left(\left[\Ea{\size{\G_n}}\right]^2\right)$$
shows that it is satisfied by many models of random graphs (see Proposition \ref{prop:star} and the examples in Section \ref{sec:examples}). We note explicitly that Theorem \ref{thm:rdmain} recovers for random graph essentially the same dichotomy based on the asymptotic probabilistic behavior of the natural random counterpart of the invariant $\zeta_n$ described in \eqref{eq:psuffcond}. Fortunately, this dichotomy translates into the deterministic easily checkable condition \eqref{eq:eqpsuffcond}.

The rest of the paper is organized as follows: the remainder of he present section is devoted to some notation. In Section \ref{sec:mom} we compute the second moment structure of the statistics we are interested in Theorem \ref{thm:main}. In section \ref{sec:main}, we devise the leading term of the asymptotic variance of the statistics dealt with in Theorem \ref{thm:main} and prove the theorem. Finally, In Section \ref{sec:examples}, we illustrate Theorem \ref{thm:main} by deterministic graphs, illustrate Theorem \ref{thm:rdmain} by random graphs examples and prove it. The proof of both theorems relies on a probabilistic lemma based on the notion of uniform integrability. This lemma, along with other technical results, is proved in the Appendix.     

\paragraph{Preliminaries} A \emph{composition $\col$ of an integer $n$ into $s$ classes} is a tuple $(c_1,\ldots,c_s)$ such that $c_1+\cdots+c_s=n$. For a graph $G$ and $v\in V(G)$, $d_G(v)$ denotes the degree of $v$ in $G$. We use $\Sigma_2(G)$ for the sum of the squares of the degrees of $G$. For a positive integer $s$ and a non-negative integer $k$, the degree-$k$ elementary symmetric polynomial in $s$ indeterminates is an element of the polynomial ring $\mathbb{Z}[X_1,\ldots,X_s]$ defined by 
$$E_k(X_1,\ldots,X_s)=\sum_{\substack{A\subseteq [s]\\|A|=k}}\prod_{i\in A}X_i.$$ 
If $k=0$, then $E_0(X_1,\ldots,X_s)=1$ while if $k>s$ we set $E_k(X_1,\ldots,X_s)=0$. Also the $k$-th \emph{power sum} is the polynomial $P_k\in\mathbb{Z}[X_1\ldots,X_s]$ defined by 
$$P_k(X_1,\ldots,X_s)=\sum_{j=1}^sX_i^k.$$
In particular $P_0=s$, $P_1=E_1$. Moreover, Newton's identities, valid for every $k$ such that $1\leq k\leq s$, relate the elementary symmetric polynomial to the power sum by
$$kE_k=\sum_{j=1}^k(-1)^{j-1}E_{k-j}P_j.$$
The first three non-trivial such identities yield the system 
\begin{align*}
	&\begin{cases} 
		3E_3-E_2P_1+E_1P_2-P_3= 0 \\ 
		2E_2-E_1P_1+P_2 =0 \\ 
		E_1-P_1=0 
	\end{cases}
\end{align*}
For later purposes, the system above can be equivalently written as
\begin{equation}\label{eq:newtid}
	\begin{cases} 
		E_3-\dfrac{P_1^3-3P_1P_2+2P_3}{6}= 0 \\ 
		2E_2-P^2_1+P_2 =0 \\ 
		E_1-P_1=0 
	\end{cases}
\end{equation} 
We adhere to Knuth's notation for the falling factorial: for non-negative integers $a$ and $b$, $$\kn{a}{b}=a(a-1)\cdots(a-b+1),$$ 
with $\kn{a}{0}=1$ and $\kn{a}{b}=0$ whenever $b>a$.  
\section{The first two moments of the random sizes of subgraphs induced by colors}\label{sec:mom}  
Lemma \ref{lem:counting} below describes the probability of certain fundamental events in $\mathsf{R}_{\col,n}$ that push forward the distribution of the random variables appearing in Theorem \ref{thm:moments}. Its proof is given in he appendix.
\begin{lemma}\label{lem:counting}
	Let $A_1,\ldots,A_k$ be pairwise disjoint subsets of $V(G)$ with $1\leq k\leq s$. Let $a_j=|A_j|$ and set $a=\sum_ja_j$. Let $\Inj([k],[s])$ be the set of injections from $[k]$ into $[s]$. In the random coloring space $\mathsf{R}_{\col,n}$, define the events 
	\begin{itemize}
		\item[--] $\mathcal{E}(A_1,\ldots,A_k,\iota)$, $\iota\in \Inj([k],[s])$: all vertices of $A_j$ receive color $\iota(j)$ for $j\in [k]$.
		\item[--] $\mathcal{E}(A_1,\ldots,A_k)$: all vertices of $A_j$ receive the same color and these colors are distinct for different $j\in [k]$.
	\end{itemize}
	Then 
	$$\pra{\mathcal{E}(A_1,\ldots,A_k,\iota)}=\mydfrac{1}{\kn{n}{a}}\prod_{j\in [k]}\kns{c}{\iota(j)}{a_j}$$
	and    
	$$\pra{\mathcal{E}(A_1,\ldots,A_k)}=\mydfrac{1}{\kn{n}{a}}\sum_{\iota\in \Inj([k],[s])}\prod_{j\in [k]}\kns{c}{\iota(j)}{a_j}.$$
	In particular, writing $e_k$ for the value of $E_k$ in $\col$, i.e., $e_k=E_k(c_1,\ldots,c_s)$, if $k=1$, then, with $A=A_1$ and $\iota(1)=i$,  
	\begin{equation}\label{eq:monedge}
	\pra{\mathcal{E}(A,i)}=\mydfrac{\kns{c}{i}{a}}{\kn{n}{a}};
	\end{equation}
	if $a_1=2$, $a_2=\cdots=a_k=1$, hence $a=k+1$, then 
	\begin{equation}\label{eq:biedge}
	\pra{\mathcal{E}(A_1,\ldots,A_k)}=\mydfrac{(k-1)!}{\kn{n}{k+1}}\Big[(n-k)e_k-(k+1)e_{k+1}\Big];
	\end{equation}
	if $a_1=a_2\ldots=a_k=b$, then 
	\begin{equation}\label{eq:severlaedges}
	\pra{\mathcal{E}(A_1,\ldots,A_k)}=\mydfrac{k!}{\kn{n}{kb}}E_k(\kns{c}{1}{b}\ldots \kns{c}{s}{b}),
	\end{equation}
which, for $k=2$ and $b=2$ specializes to 
\begin{equation}\label{eq:severlaedgesbuttwo}
	\pra{\mathcal{E}(A_1,A_2)}=\mydfrac{2\left(e_2(e_2-e_1+1)-e_3(2e_1-3)+2e_4\right)}{\kn{n}{4}}.
\end{equation}
 Finally, since all of the events above depends on the sets $A_1,\ldots,A_k$ only through $k$ and their cardinalities $a_i$'s, these events are actually functions of the composition $(a_1,a_2,\ldots,a_k)$ of the integer $a$. Therefore, we write $\mathcal{E}(a_1,\ldots,a_k,\iota)$ and $\mathcal{E}(a_1,\ldots,a_k)$ for $\mathcal{E}(A_1,\ldots,A_k,\iota)$ and $\mathcal{E}(A_1,\ldots,A_k)$, respectively.  
\end{lemma}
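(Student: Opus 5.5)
The plan is to reduce everything to one counting formula for colorings with prescribed colors on a fixed vertex set, and then obtain the specializations by symmetric-function algebra.

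\emph{General formula.} Fix $\iota\in\Inj([k],[s])$. A $\col$-coloring lies in $\mathcal{E}(A_1,\ldots,A_k,\iota)$ exactly when it colors $A_j$ with $\iota(j)$ for each $j$. Its restriction to the remaining $n-a$ vertices is then an arbitrary coloring with class sizes $c_{\iota(j)}-a_j$ on the used colors and $c_i$ on the others. The number of such colorings is
\[
\frac{(n-a)!}{\prod_{j}(c_{\iota(j)}-a_j)!\prod_{i\notin\range\iota}c_i!}.
\]
This is $0$ if some $a_j>c_{\iota(j)}$, which is consistent with the falling-factorial convention. Multiplying by $c_1!\cdots c_s!/n!$ gives $\prod_j\kns{c}{\iota(j)}{a_j}/\kn{n}{a}$. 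The event $\mathcal{E}(A_1,\ldots,A_k)$ is the disjoint union over $\iota\in\Inj([k],[s])$ of the events $\mathcal{E}(A_1,\ldots,A_k,\iota)$, so the second formula follows by summing. Formula \eqref{eq:monedge} is the case $k=1$. The final remark, that the probabilities depend only on $(a_1,\ldots,a_k)$, is immediate from the formula.

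\emph{Formula \eqref{eq:biedge}.} Write $c_i^{\,\underline 2}=c_i^2-c_i$ and split the sum over injections into two parts.
\begin{itemize}
\item The terms $-c_{\iota(1)}c_{\iota(2)}\cdots c_{\iota(k)}$ sum to $-k!\,e_k$, since each $k$-subset of colors occurs $k!$ times.
\item The terms $c_{\iota(1)}^2c_{\iota(2)}\cdots c_{\iota(k)}$ sum to $(k-1)!\sum_i c_i^2e_{k-1}^{(i)}$, where $e^{(i)}_r$ denotes $E_r$ evaluated at $\col$ with $c_i$ removed.
\end{itemize}
To evaluate the second part, use $e_k=e_k^{(i)}+c_ie_{k-1}^{(i)}$. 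Multiplying by $c_i$ and summing over $i$ gives
\[
ne_k=\sum_ic_ie_k^{(i)}+\sum_ic_i^2e_{k-1}^{(i)}.
\]
Moreover $\sum_ic_ie_k^{(i)}=(k+1)e_{k+1}$, because each $(k+1)$-subset is counted once for each of its elements. Hence $\sum_ic_i^2e_{k-1}^{(i)}=ne_k-(k+1)e_{k+1}$. Adding the two parts gives $(k-1)!\big[(n-k)e_k-(k+1)e_{k+1}\big]$, and dividing by $\kn{n}{k+1}$ gives \eqref{eq:biedge}.

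\emph{Formulas \eqref{eq:severlaedges} and \eqref{eq:severlaedgesbuttwo}.} When all $a_j=b$, the sum over injections of $\prod_j x_{\iota(j)}$, with $x_i=\kns{c}{i}{b}$, equals $k!\,E_k(x_1,\ldots,x_s)$. This gives \eqref{eq:severlaedges}.

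For $k=b=2$, set $y_i=c_i^2-c_i$. By \eqref{eq:newtid}, $2E_2(y)=P_1(y)^2-P_2(y)$. Expanding in power sums $p_r=P_r(\col)$ gives
\[
P_1(y)=p_2-p_1,\qquad P_2(y)=p_4-2p_3+p_2.
\]
It then remains to rewrite $p_2,p_3,p_4$ in terms of $e_1=n,e_2,e_3,e_4$ via Newton's identities, including the fourth one $p_4=e_1p_3-e_2p_2+e_3p_1-4e_4$. One then checks that
\[
E_2(y)=e_2(e_2-e_1+1)-e_3(2e_1-3)+2e_4.
\]

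The only step that is not mechanical is this last reduction. It is routine but error-prone, so I would verify the resulting identity on small cases, for example $s=2$ and $s=3$ with small $c_i$, as a sanity check.
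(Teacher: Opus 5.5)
Your proof is correct, and for the two general formulas, \eqref{eq:monedge} and \eqref{eq:severlaedges} it is the paper's argument: count the completions of the prescribed coloring by a multinomial coefficient, then take the disjoint union over injections.

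For \eqref{eq:biedge}, the paper first proves the auxiliary identity $\sum_{|I|=k}\bigl(\sum_{i\in I}X_i-k\bigr)\prod_{j\in I}X_j=(E_1-k)E_k-(k+1)E_{k+1}$ by sorting the monomials of $E_1E_k$. Your relation $ne_k=(k+1)e_{k+1}+\sum_i c_i^2e^{(i)}_{k-1}$, derived from the deletion recurrence, is that same expansion grouped by the color of $A_1$. The difference there is only presentational.

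The real divergence is in \eqref{eq:severlaedgesbuttwo}. The paper expands $\sum_{i<j}\kns{c}{i}{2}\kns{c}{j}{2}$ directly and evaluates the pieces with the identities $\sum_{i<j}c_i^2c_j^2=e_2^2-2e_1e_3+2e_4$ and $\sum_{i<j}(c_i^2c_j+c_ic_j^2)=e_1e_2-3e_3$. You instead pass through power sums and Newton's identities up to order four. Your route needs no extra lemma but relies on more cancellation.

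It does close. Substitute $p_1=e_1$, $p_2=e_1^2-2e_2$, $p_3=e_1^3-3e_1e_2+3e_3$ and $p_4=e_1^4-4e_1^2e_2+4e_1e_3+2e_2^2-4e_4$ into $(p_2-p_1)^2-(p_4-2p_3+p_2)$. All terms in $e_1^4$, $e_1^3$, $e_1^2e_2$ and $e_1^2$ cancel, leaving $2e_2^2-2e_1e_2+2e_2-4e_1e_3+6e_3+4e_4$. This is exactly $2\bigl(e_2(e_2-e_1+1)-e_3(2e_1-3)+2e_4\bigr)$, as required.

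The paper states Newton's identities only for $k\le s$. The ones you use still hold for $s<4$ under the convention $e_k=0$ for $k>s$; equivalently, pad $\col$ with zero entries, which changes neither $E_2(y)$ nor the $e_k$. So your small-$s$ checks are a sanity test, not a logically required separate case.
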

The preceding lemma allows us to compute the first two moments of the distribution of the random size of the subgraphs of $G$ induced by the vertices of color $j$, $j\in [s]$, under a random $\col$-coloring as well as the first two moments of the random number of the edges whose ends receive different colors under random $\col$-colorings. 
\begin{theorem}\label{thm:moments}
Let $G$ be a graph of order $n$ and size $m$ and, for $s\geq 2$, let $\col=(c_1,\ldots,c_s)$ be a composition of $n$ in $s$ parts. Write $e_k$ for the value of $E_k$ in $\col$, i.e., $e_k=E_k(c_1,\ldots,c_s)$. Let $M_i(G)$, $M_{i,j}(G)$, $M(G)$ and $L(G)$ be the random variables defined by \eqref{eq:M_i}, \eqref{eq:M_{i,j}}, and \eqref{eq:M_and_L}, respectively. Recall that $M(G)+L(G)=m$, hence $M(G)$ and $L(G)$ have the same variance.
Then, for $i=1,\ldots,s$, the mean $\widebar{M}_i(G)$ and the variance $\sigma^2_i(G)$ of $M_i(G)$ are given by
 $$\widebar{M}_i(G)=m\mydfrac{\kns{c}{i}{2}}{\kn{n}{2}}$$
and
$$\sigma_i^2(G)=\mydfrac{\kns{c}{i}{3}(n-c_i)}{\kn{n}{4}}\Sigma_2(G)-\Big(\Big[\mydfrac{\kns{c}{i}{2}}{\kn{n}{2}}\Big]^2-\mydfrac{\kns{c}{i}{4}}{\kn{n}{4}}\Big)m^2+\Big(\mydfrac{\kns{c}{i}{2}}{\kn{n}{2}}-2\mydfrac{\kns{c}{i}{3}}{\kn{n}{3}}+\mydfrac{\kns{c}{i}{4}}{\kn{n}{4}}\Big)m.$$
Likewise, for any $i,\,j\in [s]$, $i\neq j$, the mean $\widebar{M}_{i,j}(G)$ and the variance $\sigma^2_{i,j}(G)$ of $M_{i,j}(G)$ are given by 
$$\widebar{M}_{ij}(G)=2m\mydfrac{c_ic_j}{\kn{n}{2}}$$
and
$$\sigma_{i,j}(G)=\left(\frac{c_i\kn{c_j}{2}+\kn{c_i}{2}c_j}{\kn{n}{3}}-4\frac{\kn{c_i}{2}\kn{c_j}{2}}{\kn{n}{4}}\right)\Sigma_2(G)-4\left(\left(\frac{c_ic_j}{\kn{n}{2}}\right)^2-\frac{\kn{c_i}{2}\kn{c_j}{2}}{\kn{n}{4}}\right)m^2+2\left(\frac{c_ic_j}{\kn{n}{2}}-\frac{c_i\kn{c_j}{2}+\kn{c_i}{2}c_j}{\kn{n}{3}}+2\frac{\kn{c_i}{2}\kn{c_j}{2}}{\kn{n}{4}}\right)m$$

The means of $\widebar{L}(G)$ and $\widebar{M}(G)$ are given by
\[
\widebar{L}=\Ea{L}=\mydfrac{2me_2}{\kn{n}{2}}\quad\text{and}\quad\widebar{M}=\Ea{M}=\mydfrac{m}{\kn{n}{2}}\sum_{i=1}^s\kns{c}{i}{2}\]
while their common variance  $\sigma^2$ is given by 
$$\sigma^2(G)=\left(a(\col)-b(\col)\right)\Sigma_2(G)+m^2\left(b(\col)-4\left[\mydfrac{e_2}{\kn{n}{2}}\right]^2\right)+m\left(\mydfrac{2e_2}{\kn{n}{2}}-2a(\col)+b(\col)\right)$$
where 
\begin{equation}\label{eq:ab}
	a(\col)=\mydfrac{e_2}{\kn{n}{2}}+\mydfrac{3e_3}{\kn{n}{3}}\quad\text{and}\quad b(\col)=\mydfrac{4}{\kn{n}{4}}\Big(e^2_2-(n-1)e_2-3e_3\Big).
\end{equation}
\end{theorem}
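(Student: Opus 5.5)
We write each statistic as a sum of edge indicators and expand its second moment as a double sum over ordered pairs of edges. The pairs $(e,e')$ split into three classes according to $|e\cap e'|$: equal edges ($m$ pairs), edges sharing exactly one vertex, and disjoint edges. The number of ordered pairs of distinct edges sharing a vertex is $\sum_v d_G(v)(d_G(v)-1)=\Sigma_2(G)-2m$. The number of ordered pairs of disjoint edges is therefore $m^2-m-(\Sigma_2(G)-2m)=m^2-\Sigma_2(G)+m$. Since every probability in Lemma~\ref{lem:counting} depends only on the composition of the union of the edges, each class contributes its count times a single probability. So
\[
\Ea{X^2}=m\,p_{\mathrm{eq}}+(\Sigma_2-2m)\,p_{\mathrm{adj}}+(m^2-\Sigma_2+m)\,p_{\mathrm{dis}},
\]
and the variance follows by subtracting $(m p_{\mathrm{eq}})^2$. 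Collecting coefficients of $\Sigma_2$, $m^2$ and $m$ gives $(p_{\mathrm{adj}}-p_{\mathrm{dis}})\Sigma_2+(p_{\mathrm{dis}}-p_{\mathrm{eq}}^2)m^2+(p_{\mathrm{eq}}-2p_{\mathrm{adj}}+p_{\mathrm{dis}})m$. This is exactly the shape of every displayed formula.

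For $M_i$ we take the probabilities from \eqref{eq:monedge}:
\[
p_{\mathrm{eq}}=\frac{\kns{c}{i}{2}}{\kn{n}{2}},\qquad p_{\mathrm{adj}}=\frac{\kns{c}{i}{3}}{\kn{n}{3}},\qquad p_{\mathrm{dis}}=\frac{\kns{c}{i}{4}}{\kn{n}{4}}.
\]
The mean is immediate. The $\Sigma_2$ coefficient simplifies as
\[
\frac{\kns{c}{i}{3}}{\kn{n}{3}}-\frac{\kns{c}{i}{4}}{\kn{n}{4}}=\frac{\kns{c}{i}{3}\,\bigl((n-3)-(c_i-3)\bigr)}{\kn{n}{4}},
\]
which gives $\kns{c}{i}{3}(n-c_i)/\kn{n}{4}$, as stated.

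For $M_{i,j}$, an edge is bichromatic $\{i,j\}$ with probability $2c_ic_j/\kn{n}{2}$. For two adjacent edges $uv,uw$ both to be $\{i,j\}$-bichromatic, either $u$ has color $i$ and $v,w$ have color $j$, or the reverse. This has probability $(c_i\kn{c_j}{2}+\kn{c_i}{2}c_j)/\kn{n}{3}$ by the first formula of Lemma~\ref{lem:counting}. For two disjoint edges there are $2\cdot 2$ orientations, each with probability $\kn{c_i}{2}\kn{c_j}{2}/\kn{n}{4}$. Note that $p_{\mathrm{dis}}$ is computed here as a probability of a pattern of colors on four vertices, not as a product. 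Substituting into the template gives the stated $\sigma^2_{i,j}$; the displayed $\sigma_{i,j}$ is clearly the variance.

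For $L$, we have $p_{\mathrm{eq}}=2e_2/\kn{n}{2}$, because $\sum_{i\neq j}c_ic_j=2e_2$; this also gives $\widebar L$, and $\widebar M=m-\widebar L$. For adjacent edges $uv,uw$, $L$ counts both exactly when $f(u)\neq f(v)$ and $f(u)\neq f(w)$. We split by whether $f(v)=f(w)$:
\begin{itemize}
\item $f(v)=f(w)$: this is \eqref{eq:biedge} with $k=2$, namely $[(n-2)e_2-3e_3]/\kn{n}{3}$.
\item $f(v)\neq f(w)$: all three colors distinct, probability $3!\,e_3/\kn{n}{3}$.
\end{itemize}
Summing gives $p_{\mathrm{adj}}=[(n-2)e_2+3e_3]/\kn{n}{3}$, which should equal $a(\col)$ after using $\kn{n}{3}=\kn{n}{2}(n-2)$.

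For disjoint edges it is easier to write $L=m-M$ and compute the probability that two disjoint edges are both monochromatic. This is
\[
\sum_i \frac{\kns{c}{i}{4}}{\kn{n}{4}}+\pra{\mathcal{E}(2,2)},
\]
with $\pra{\mathcal{E}(2,2)}$ given by \eqref{eq:severlaedgesbuttwo}. By inclusion–exclusion, the probability that both are bichromatic is $1-2P(\text{mono})+P(\text{both mono})$.

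The main obstacle is showing that this $p_{\mathrm{dis}}$ equals $b(\col)=4(e_2^2-(n-1)e_2-3e_3)/\kn{n}{4}$. We would express $\sum_i\kns{c}{i}{4}$ and $\sum_i\kns{c}{i}{2}$ through power sums $P_1,\ldots,P_4$, convert them to $e_1=n,e_2,e_3,e_4$ via Newton's identities \eqref{eq:newtid} and the next identity for $P_4$, and verify that all $e_4$ terms cancel. A cleaner alternative is to count directly that the four endpoints $u,v,x,y$ satisfy $f(u)\neq f(v)$ and $f(x)\neq f(y)$. This is done by summing over the set partitions of $\{u,v,x,y\}$ compatible with these constraints, using Lemma~\ref{lem:counting} for each pattern: all distinct, one coincidence across the two edges, or two coincidences across them. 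We expect this route to give $4(e_2^2-(n-1)e_2-3e_3)$ after simplifying with $\sum_i c_i^2=n^2-2e_2$. Either way, substituting $p_{\mathrm{eq}},p_{\mathrm{adj}}=a,p_{\mathrm{dis}}=b$ into the template yields $\sigma^2$. The equality $\var{M}=\var{L}$ is immediate from $M+L=m$.
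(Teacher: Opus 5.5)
Your proposal is correct and is essentially the paper's argument: you use the same split of ordered edge pairs into equal, adjacent and disjoint ones, giving the coefficients $p_{\mathrm{adj}}-p_{\mathrm{dis}}$, $p_{\mathrm{dis}}-p_{\mathrm{eq}}^2$ and $p_{\mathrm{eq}}-2p_{\mathrm{adj}}+p_{\mathrm{dis}}$, and your ``cleaner alternative'' for disjoint edges is exactly the paper's sum $2\pra{\mathcal{E}(2,2)}+4\pra{\mathcal{E}(2,1,1)}+\pra{\mathcal{E}(1,1,1,1)}$. The algebra you left as ``expected'' does close: over the common denominator $\kn{n}{4}$ the $e_4$ terms contribute $(8-32+24)e_4=0$ and the $e_3$ terms contribute $-4(2n-3)e_3+8(n-3)e_3=-12e_3$, which gives $b(\col)$; your derivations of the $M_i$ and $M_{i,j}$ formulas from the same template, which the paper only cites, are also correct.
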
     
\begin{proof}
Since the expression for the moments of $M_i(G)$ and $M_{i,j}(G)$ are given in \cite{ourssr}, we prove only the expression for the mean and the common variance of $M(G)$ and $L(G)$. However, with the help of Lemma \ref{lem:counting}, the same technique can be easily adapted to prove the first part of the theorem as well. For ease of notation we omit the explicit reference to the graph $G$ in each of the random variables defined in the statement of the theorem. Observe that $L=\sum_{uv\in E(G)}Y_{uv}$ where $Y_{uv}$ is the indicator of the event that $u$ and $v$ receive different colors while $M=\sum_iM_i$. The event that $u$ and $v$ receive different colors is an event of the form $\mathcal{E}(1,1)$ as described in Lemma \ref{lem:counting}. By \eqref{eq:severlaedges}, with $k=2$, and $b=1$, one has
\[
\Ea{Y_{uv}}=\mydfrac{2e_2}{\kn{n}{2}}
\] 
so that, by linearity of the expectation 
\[
\widebar{L}=\Ea{L}=\mydfrac{2me_2}{\kn{n}{2}}\quad\text{and}\quad\widebar{M}=\Ea{M}=\mydfrac{m}{\kn{n}{2}}\sum_{i=1}^s\kns{c}{i}{2}.
\]
Since $L$ and $M$ have the same variance, it suffices to compute the variance of $L$. Using the identity $X^2=X$ valid for a Bernoulli random variable and writing $uv\sim u'v'$ to mean that the edges $uv$ and $u'v'$ are adjacent in $G$, the definition of variance yields, 
\[
\begin{split}
\var{L}&=\Ea{L^2}-\left(\Ea{L}\right)^2\\
&=\Big[\widebar{L}-\widebar{L}^2\Big]+2\Big(\sum_{uv\sim u'v'}\Ea{Y_{uv}Y_{u'v'}}+\sum_{uv\not\sim u'v'}\Ea{Y_{u,v}Y_{u'v'}}\Big).
\end{split}
\]
Therefore, the computation of $\var{L}$ reduces to computing the expected value of the product $Y_{u,v}Y_{u',v'}$ in the cases $uv\sim u'v'$ and $uv\not\sim u'v'$. In both cases, the expected value equals the probability that both factors are equal to 1. Observe that in both cases any $\col$-coloring of $G$ favorable to either events, restricts to a proper vertex coloring of the subgraph of $G$ spanned by $uv$ and $u'v'$.  

Let us start by computing $\pra{Y_{u,v}=1,Y_{u',v'}=1}$ assuming $uv\sim u'v'$. Since $uv$ and $u'v'$ are adjacent, they span a subgraph $H$ isomorphic to $K_{1,2}$. Every proper vertex coloring of $H$ induces a (unordered) partition of $V(H)$ into color classes. There are two types of such partitions: one of type $(2,1)$ and one of type $(1,1,1)$. Here, each entry in these tuples, is the cardinality of the corresponding color class. Therefore the event $(Y_{u,v}=1,Y_{u',v'}=1)$ occurs precisely when either $\mathcal{E}(2,1)$ or $\mathcal{E}(1,1,1)$ occur. Therefore, by \eqref{eq:biedge} with $a_1=2$, $a_2=1$ and $a=3$, and \eqref{eq:severlaedges}, with $k=3$ and $b=1$ one has
$$\pra{Y_{u,v}=1,Y_{u',v'}=1}=\pra{\mathcal{E}(2,1)}+\pra{\mathcal{E}(1,1,1)}$$
so that, writing $e_k$ for $e_k(c_1\ldots,c_s)$.
\[
\pra{Y_{u,v}=1,Y_{u',v'}=1}=\mydfrac{(n-2)e_2-3e_3}{\kn{n}{3}}+\mydfrac{6e_3}{\kn{n}{3}}=\mydfrac{(n-2)e_2+3e_3}{\kn{n}{3}}.
\]
Let us compute $\pra{Y_{u,v}=1,Y_{u',v'}=1}$ assuming $uv\not\sim u'v'$. Since $uv$ and $u'v'$ are not adjacent, they span a subgraph $H$ isomorphic to $2K_2$. As above, since any $\col$-coloring of $G$ restricts to a proper vertex coloring of $H$, it follows that every $\col$-coloring of $G$ induces a (unordered) partition of $V(H)$ into color classes. There are three types of such partitions: two partitions of type $(2,2)$, four partitions of type $(2,1,1)$ and one partition of type $(1,1,1,1)$. Reasoning exactly as in the previous case yields
$$\pra{Y_{u,v}=1,Y_{u',v'}=1}=2\pra{\mathcal{E}(2,2)}+4\pra{\mathcal{E}(2,1,1)}+\pra{\mathcal{E}(1,1,1,1)}$$
so that, recalling \eqref{eq:biedge} and \eqref{eq:severlaedges} and using that $e_1=n$,
\[
\begin{split}
	\pra{Y_{u,v}=1,Y_{u',v'}=1}&=4\mydfrac{\big(e_2(e_2-e_1+1)-e_3(2e_1-3)+2e_4\big)}{\kn{n}{4}}+8\mydfrac{(e_1-3)e_3-4e_4}{\kn{n}{4}}+\mydfrac{4!e_4}{\kn{n}{4}}\\
	&=\mydfrac{4}{\kn{n}{4}}\Big(e_2(e_2-n+1)-e_3(2n-3)+2e_4+2(n-3)e_3-8e_4+3!e_4\Big)\\
	&=\mydfrac{4}{\kn{n}{4}}\Big(e_2(e_2-n+1)-3e_3\Big).\\
\end{split}
\]
Let $a(\col)=\pra{Y_{u,v}=1,Y_{u',v'}=1}$ for $uv\sim u'v'$, and $b(\col)=\pra{Y_{u,v}=1,Y_{u',v'}=1}$ for $uv\not\sim u'v'$. Since, as we proved, both $a(\col)$ and $b(\col)$ are constant independent of $uv$ and $u'v'$, it follows that
\[
\begin{split}
	\var{L}&=\Big[\mydfrac{2me_2}{\kn{n}{2}}-\Big(\mydfrac{2me_2}{\kn{n}{2}}\Big)^2\Big]+2\Big[\pi_3(G)a(\col)+\left({m\choose 2}-\pi_3(G)\right)b(\col)\Big]\\
	&=\Big[\mydfrac{2me_2}{\kn{n}{2}}-\Big(\mydfrac{2me_2}{\kn{n}{2}}\Big)^2\Big]+(a(\col)-b(\col))(\Sigma_2(G)-2m)+b(\col)(m^2-m)=\\
	&=(a(\col)-b(\col))\Sigma_2(G)+m^2\Big(b(\col)-4\Big[\mydfrac{e_2}{\kn{n}{2}}\Big]^2\Big)+m\Big(\mydfrac{2e_2}{\kn{n}{2}}-2a(\col)+b(\col)\Big)
\end{split}
\]
and the proof is completed.
\end{proof}

\begin{remark}\label{rem:sum_of_squares_order}
	By counting in two ways, for any graph $G_n$ with $n$ vertices, $m_n$ edges and maximum degree $\Delta_n$, it holds that 
	$$\Sigma_2(G_n)=\sum_{v\in V(G)}d^2_{G_n}(v)=\sum_{uv\in E(G_n)}(d_{G_n}(u)+d_{G_n}(v)).$$ 
	Since for $uv\in E(G_n)$ it holds that $2\leq d_{G_n}(u)+d_{G_n}(v)\leq 2\Delta_n$ and $\Delta_n\leq \min\{n-1,m_n\}$, this yields 
	$$2m_n\leq\Sigma_2(G_n)\leq2\Delta_nm_n\leq2(n-1)m_n.$$
	In particular, if $\Delta_n=O(1)$, then $\Sigma_2(G_n)=\Theta(m_n)$ and, provided $m_n\to\infty$, $\zeta_n^2=\Theta(m_n^{-1})=o(1)$. Also, $m_n=\omega(n)$ implies $\zeta_n^2\leq2(n-1)/m_n=o(1)$. Conversely, $\zeta_n=\Theta(1)$ implies
	$$\kappa m_n^2\leq\Sigma_2(G_n)\leq2\Delta_nm_n$$
	for some $\kappa>0$, and hence $\Delta_n=\Theta(m_n)$. Combining the lower bound $\kappa m_n^2\leq\Sigma_2(G_n)$ with $\Sigma_2(G_n)\leq2(n-1)m_n$ also gives $m_n=O(n)$. Thus only $O(1)$ vertices can have degree $\Theta(m_n)$, and the degree sequence is far from regular.
\end{remark}
\begin{remark}\label{rem:trivial cocentration}
	All statistics in Theorem \ref{thm:moments} lie in $[0,m_n]$. Moreover,
	$$\widebar{M}_i(G_n)=m_n\mydfrac{\kns{c}{i,n}{2}}{\kn{n}{2}},$$
	so Markov's inequality gives $M_i(G_n)\xrightarrow{P}0$ whenever $m_n\kns{c}{i,n}{2}=o(\kn{n}{2})$, independently of the structure of $G_n$. The lower bound on $c_{i,n}/n$ in Theorem \ref{thm:main} excludes this trivial concentration.
\end{remark}

	\section{Proof of Theorem \ref{thm:main}}\label{sec:main}
Before proving Theorem \ref{thm:main}, we collect some useful asymptotic estimates needed in its proof.  
\begin{lemma}\label{lem:asymtotics}
	Let $\col_n(s)=(c_{1,n},\ldots,c_{s,n})$ be compositions of $n$ into a fixed number $s$ of parts, $\boldsymbol{\gamma}_n=n^{-1}\col_n(s)$ be the corresponding color-distribution vector with $\boldsymbol{\gamma}_n=(\gamma_{1,n},\ldots,\gamma_{s,n})$, and let $\boldsymbol{\upsilon}_s=s^{-1}\boldsymbol{1}_s$ be tthe center of standard $s$-dimensional simplex. Write $a_n=a(\col_n(s))$, $b_n=b(\col_n(s))$, and $e_{k,n}=E_k(\col_n(s))$. If $\liminf_n\min_i\gamma_{i,n}>0$, then
	\begin{enumerate}[label={\rm (\alph*)}]
		\item\label{com:a} $\displaystyle\frac{\kns{c}{i}{\ell}}{\kn{n}{\ell}}=\gamma_{i,n}^{\ell}+O(n^{-1})$ for every fixed $\ell\geq1$ and $i\in[s]$;
		\item\label{com:a1} $\displaystyle\frac{\kns{c}{i}{2\ell}}{\kn{n}{2\ell}}-\left(\frac{\kns{c}{i}{\ell}}{\kn{n}{\ell}}\right)^2=O(n^{-1})$ for every fixed $\ell\geq1$ and $i\in[s]$;
		\item\label{com:b} %If $\liminf_{n\to\infty}\min_{in\in [s]}\gamma_{i,n}>0$, then  %$$\left(\mydfrac{2e_{2,n}}{\kn{n}{2}}-2a_n+b_n\right)_{n\geq 1}$$ 
		%is a bounded sequence, 
		$\Big(b_n-4\Big[\mydfrac{e_{2,n}}{\kn{n}{2}}\Big]^2\Big)=O\left(\mydfrac{1}{n}\right)$
		\item\label{com:c} %If $\liminf_{n\to\infty}\min_{in\in [s]}\gamma_{i,n}>0$, then
		$a_n-b_n=\rho_n+O\left(\mydfrac{1}{n}\right)$ where
		\[
		\rho_n=\begin{cases}
			o(1) & \text{if} \left\|\boldsymbol{\gamma}_n-\boldsymbol{\upsilon}_s\right\|^2=o(1),\\
			\Theta(1) &  \text{if} \left\|\boldsymbol{\gamma}_n-\boldsymbol{\upsilon}_s\right\|^2=\Theta(1).
		\end{cases}
		\]
	\end{enumerate}
\end{lemma}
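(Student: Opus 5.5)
The plan is to reduce everything to the normalized composition $\boldsymbol{\gamma}_n$. We use that $e_{k,n}=E_k(c_{1,n},\ldots,c_{s,n})=n^kE_k(\boldsymbol{\gamma}_n)$ by homogeneity, and that $E_k(\boldsymbol{\gamma}_n)$ is bounded because $\boldsymbol{\gamma}_n$ lies in the simplex.

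For \ref{com:a} we write
$$\frac{\kns{c}{i}{\ell}}{\kn{n}{\ell}}=\prod_{k=0}^{\ell-1}\frac{c_{i,n}-k}{n-k},$$
and observe that each factor equals $\gamma_{i,n}+O(n^{-1})$ uniformly, since $(c-k)/(n-k)-c/n=k(c-n)/(n(n-k))$. The product of $\ell$ bounded factors is therefore $\gamma_{i,n}^\ell+O(n^{-1})$. Item \ref{com:a1} follows at once: apply \ref{com:a} with $2\ell$ and with $\ell$, and note that squaring $\gamma_{i,n}^\ell+O(n^{-1})$ gives $\gamma_{i,n}^{2\ell}+O(n^{-1})$.

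For \ref{com:b} we expand
$$b_n-4\Big[\frac{e_{2,n}}{\kn{n}{2}}\Big]^2=4e_{2,n}^2\Big(\frac{1}{\kn{n}{4}}-\frac{1}{(\kn{n}{2})^2}\Big)-\frac{4\big((n-1)e_{2,n}+3e_{3,n}\big)}{\kn{n}{4}}.$$
The bracket is $O(n^{-5})$, because $(\kn{n}{2})^2-\kn{n}{4}=O(n^3)$. Since $e_{2,n}=O(n^2)$ and $e_{3,n}=O(n^3)$, both terms are $O(n^{-1})$.

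For \ref{com:c} the same estimates give
$$\frac{e_{2,n}}{\kn{n}{2}}=E_2(\boldsymbol{\gamma}_n)+O(n^{-1}),\qquad \frac{e_{3,n}}{\kn{n}{3}}=E_3(\boldsymbol{\gamma}_n)+O(n^{-1}),$$
and, by \ref{com:b}, $b_n=4E_2(\boldsymbol{\gamma}_n)^2+O(n^{-1})$. Hence $a_n-b_n=\rho_n+O(n^{-1})$ with
$$\rho_n=E_2+3E_3-4E_2^2,$$
evaluated at $\boldsymbol{\gamma}_n$. We now use Newton's identities \eqref{eq:newtid} with $P_1=1$, which give $E_2=(1-P_2)/2$ and $E_3=(1-3P_2+2P_3)/6$. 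Then $E_2+3E_3=1-2P_2+P_3$ and $4E_2^2=(1-P_2)^2$, so
$$\rho_n=P_3-P_2^2=\sum_i\gamma_{i,n}\big(\gamma_{i,n}-P_2\big)^2,$$
the variance of the entries $\gamma_{i,n}$ under the weights $\gamma_{i,n}$. This is the key identity, and the main obstacle is spotting this variance form; the algebra itself is routine. It is equivalent to the Cauchy–Schwarz inequality $P_2^2\le P_1P_3$.

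The comparison with $\|\boldsymbol{\gamma}_n-\boldsymbol{\upsilon}_s\|^2$ then comes from two inequalities.

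\textbf{Upper bound.} A weighted variance is at most the weighted second moment about any point, here $1/s$. Since $\gamma_{i,n}\le1$,
$$\rho_n\le\sum_i\gamma_{i,n}(\gamma_{i,n}-1/s)^2\le\|\boldsymbol{\gamma}_n-\boldsymbol{\upsilon}_s\|^2.$$

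\textbf{Lower bound.} Let $\mu=\liminf_n\min_i\gamma_{i,n}>0$. For large $n$ we have $\min_i\gamma_{i,n}\ge\mu/2$. The unweighted sum $t\mapsto\sum_i(\gamma_{i,n}-t)^2$ is minimized at $t=1/s$, so
$$\rho_n\ge\frac{\mu}{2}\sum_i(\gamma_{i,n}-P_2)^2\ge\frac{\mu}{2}\|\boldsymbol{\gamma}_n-\boldsymbol{\upsilon}_s\|^2.$$

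Thus $\rho_n=\Theta(\|\boldsymbol{\gamma}_n-\boldsymbol{\upsilon}_s\|^2)$, which yields both cases of \ref{com:c}.
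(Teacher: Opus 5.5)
Your proof is correct. Parts (a), (a1), (b), and the reduction $a_n-b_n=P_3(\boldsymbol{\gamma}_n)-P_2(\boldsymbol{\gamma}_n)^2+O(n^{-1})$ via Newton's identities, follow the paper's route. You give more detail, e.g.\ the explicit $O(n^{-5})$ bound on $1/\kn{n}{4}-1/(\kn{n}{2})^2$, and your (a) and (b) do not actually need the hypothesis on $\min_i\gamma_{i,n}$.

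The genuine difference is the last step of (c). The paper shows $P_3-P_2^2\ge 0$ by Cauchy--Schwarz and identifies the equality case as vectors uniform on their support. It then restricts to the compact set $K_\delta=\{\boldsymbol{x}\in\boldsymbol{\Delta}_s : \min_i x_i\ge\delta\}$, on which $\boldsymbol{\upsilon}_s$ is the only zero, and concludes by continuity and compactness. That yields the dichotomy only qualitatively, with no constants.

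You instead write $\rho_n=\sum_i\gamma_{i,n}(\gamma_{i,n}-P_2)^2$ as a weighted variance and obtain the explicit sandwich $\frac{\mu}{2}\|\boldsymbol{\gamma}_n-\boldsymbol{\upsilon}_s\|^2\le\rho_n\le\|\boldsymbol{\gamma}_n-\boldsymbol{\upsilon}_s\|^2$. This is more elementary and strictly stronger. It gives $\rho_n\asymp\|\boldsymbol{\gamma}_n-\boldsymbol{\upsilon}_s\|^2$ with explicit constants, covers intermediate rates rather than only the $o(1)$ and $\Theta(1)$ cases, and the upper bound needs no assumption on $\min_i\gamma_{i,n}$.

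The paper's route, on the other hand, makes the role of that hypothesis geometrically transparent: without it, $P_3-P_2^2$ vanishes at vectors uniform on a proper sub-support. In your argument this appears only as the lower constant $\mu/2$ degenerating as $\mu\to0$.
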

\begin{proof} For fixed $k$ and $c=\Theta(n)$, 	$$\mydfrac{\kn{c}{k}}{\kn{n}{k}}=\left(\mydfrac{c}{n}\right)^k+O(n^{-1}).$$
	This proves \ref{com:a} and \ref{com:a1}; substituting the same expansion in \eqref{eq:ab} proves \ref{com:b}. For \ref{com:c}, let $\hat e_{k,n}=E_k(\boldsymbol{\gamma}_n)$. Since $n^k/\kn{n}{k}=1+O(n^{-1})$,
	$$\mydfrac{e_{k,n}}{\kn{n}{k}}=\hat{e}_{k,n}+O(1/n).$$
	Hence
	$$b_n=\mydfrac{4}{\kn{n}{4}}\Big(e^2_{2,n}-(n-1)e_{2,n}-3e_{3,n}\Big)=4\hat{e}^2_{2,n}+O(1/n).$$
	and 
	$$b_n-4\Big(\mydfrac{e_{2,n}}{\kn{n}{2}}\Big)^2=O\left(\mydfrac{1}{n}\right).$$
	Recall that 
	$$a_n=\mydfrac{e_{2,n}}{\kn{n}{2}}+\mydfrac{3e_{3,n}}{\kn{n}{3}}.$$ 
	Therefore
	$$a_n-b_n=\hat{e}_{2,n}-4\hat{e}^2_{2,n}+3\hat{e}_{3,n}+O(1/n).$$
	Let $p_{n,k}$ be the value at $\boldsymbol{\gamma}_n$ of the $k$-th power sum polynomial $P_k$. Over the standard $s$-dimensional simplex $\boldsymbol{\Delta}_s$, $\hat{e}_{1,n}=p_{1,n}=1$. Thus, plugging $\boldsymbol{\gamma}_n$ into system \eqref{eq:newtid} yields 
	\[   
	\begin{cases}
		\phantom{-}\hat{e}_{3,n}&\hspace{-.7em}=\mydfrac{1-3p_{2,n}+2p_{3,n}}{6}\\
		\phantom{.}2\hat{e}_{2,n}&\hspace{-.7em}=1-p_{2,n}. 
	\end{cases}
	\]
	Thus
	\[
	\begin{split}
		a_n-b_n&=\hat{e}_{2,n}-4\hat{e}^2_{2,n}+3\hat{e}_{3,n}+O(1/n)\\
		&=\mydfrac{1-p_{2,n}}{2}-4\Big(\mydfrac{1-p_{2,n}}{2}\Big)^2+3\mydfrac{1-3p_{2,n}+2p_{3,n}}{6}+O(1/n)\\
		&=p_{3,n}-p_{2,n}^2+O(1/n).
	\end{split}
	\]
	Observe now that by the Cauchy--Schwarz inequality $p_{3,n}-p_{2,n}^2\geq 0$. Indeed, since $\boldsymbol{\gamma}_n\in \boldsymbol{\Delta}_s$, it follows that
	\[
	1=\sum_{i=1}^s\gamma_{i,n}=\sum_{i=1}^s\left[\gamma^{\frac{1}{2}}_{i,n}\right]^2.
	\] 
	Therefore
	\[
	p_{3,n}=\sum_{i=1}^s\gamma^3_{i,n}=\sum_{i=1}^s\left[\gamma^{\frac{3}{2}}_{i,n}\right]^2\sum_{i=1}^s\left[\gamma^{\frac{1}{2}}_{i,n}\right]^2\geq \left[\sum_{i=1}^s\gamma^2_{i,n}\right]^2=p_{2,n}^2.
	\]
	Equality holds if and only if the vectors $(\gamma^{\frac{3}{2}}_{1,n},\ldots,\gamma^{\frac{3}{2}}_{s,n})$ and $(\gamma^{\frac{1}{2}}_{1,n},\ldots,\gamma^{\frac{1}{2}}_{s,n})$ are proportional, which means that the positive coordinates of $\boldsymbol{\gamma}_n$ are equal. Thus, over the full standard simplex, the zero set consists of the probability vectors that are uniform on their support. The hypothesis $\liminf_n\min_i\gamma_{i,n}>0$ allows us to choose $\delta>0$ such that, for all sufficiently large $n$, $\boldsymbol{\gamma}_n$ belongs to the compact set
	$$K_\delta=\left\{\boldsymbol{x}\in\boldsymbol{\Delta}_s\mid \min_i x_i\geq\delta\right\}.$$
	On $K_\delta$, the unique zero of $P_3-P_2^2$ is $\boldsymbol{\upsilon}_s$. By continuity, if $\left\|\boldsymbol{\gamma}_n-\boldsymbol{\upsilon}_s\right\|\to0$, then $\rho_n=p_{3,n}-p_{2,n}^2\to0$, while if $\left\|\boldsymbol{\gamma}_n-\boldsymbol{\upsilon}_s\right\|$ stays bounded away from zero, then so does $\rho_n$. The proof of the lemma is thus completed.
\end{proof}
We also need the following probabilistic lemma in the proof of both Theorem \ref{thm:main} and Theorem \ref{thm:rdmain}. Its proof is deferred to the Appendix.
\begin{lemma}\label{lem:convpro}
	Let $(X_n)_{n\geq 1}$ and $(Y_n)_{n\geq 1}$ be two sequences of random variables on a common probability space such that $0\leq X_n\leq\kappa Y_n$ almost surely for some constant $\kappa>0$. Define $X_n/Y_n=0$ on the event $\{Y_n=0\}$. If $Y_n/\E(Y_n)\xrightarrow{P}1$, then
	\begin{equation}\label{eq:uast}
		\mydfrac{\E(X_n)}{\E(Y_n)}\to0
		\Longleftrightarrow
		\mydfrac{X_n}{Y_n}\xrightarrow{P}0. \tag{*}
	\end{equation}
	If, in addition, $\E(X_n)\geq\tau\E(Y_n)$ for some constant $\tau>0$ and $Y_n/\E(Y_n)\to1$ in quadratic mean, then
	\begin{equation}\label{eq:dast}
		\mydfrac{X_n}{\E(X_n)}\xrightarrow{P}1
		\Longrightarrow
		\mydfrac{\var{X_n}}{[\E(X_n)]^2}\to0. \tag{**}
	\end{equation}
\end{lemma}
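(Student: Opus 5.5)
We prove the two parts of Lemma \ref{lem:convpro} separately. In both parts the key tool is uniform integrability of the normalized variables.

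\emph{Part (*).} Set $W_n=X_n/Y_n$, which lies in $[0,\kappa]$, and $Z_n=Y_n/\E(Y_n)$, which tends to $1$ in probability. Since $X_n=W_nY_n$ almost surely (on $\{Y_n=0\}$ both sides vanish because $0\leq X_n\leq\kappa Y_n$), we have
$$\frac{X_n}{\E(Y_n)}=W_nZ_n.$$

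For ``$\Leftarrow$'': assume $W_n\xrightarrow{P}0$. Then $W_nZ_n\xrightarrow{P}0$. The family $W_nZ_n$ is dominated by $\kappa Z_n$, so it suffices to show $(Z_n)$ is uniformly integrable. This follows from Scheffé-type reasoning: $Z_n\geq0$, $\E Z_n=1$ and $Z_n\xrightarrow{P}1$. Indeed $\E|Z_n-1|=2\E(1-Z_n)^+\to0$ by bounded convergence, since $(1-Z_n)^+\leq1$. So $Z_n\to1$ in $L^1$, hence $(Z_n)$ is uniformly integrable. Therefore $\E(W_nZ_n)\to0$, which is $\E X_n/\E Y_n\to0$.

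For ``$\Rightarrow$'': $\E(W_nZ_n)\to0$ with $W_nZ_n\geq0$ gives $W_nZ_n\to0$ in $L^1$, hence in probability. Since $Z_n\xrightarrow{P}1$, dividing yields $W_n\xrightarrow{P}0$. The division is legitimate because $\pra{Z_n<1/2}\to0$.

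\emph{Part (**).} Assume now that $\E X_n\geq\tau\E Y_n$, that $Z_n\to1$ in $L^2$, and that $U_n:=X_n/\E X_n\xrightarrow{P}1$. We must show $\var{U_n}=\E U_n^2-1\to0$. Since $0\leq U_n\leq \kappa Y_n/(\tau\E Y_n)=(\kappa/\tau)Z_n$, we get $U_n^2\leq(\kappa/\tau)^2Z_n^2$. Convergence $Z_n\to1$ in $L^2$ means $\E Z_n^2\to1$ together with $Z_n^2\xrightarrow{P}1$, so by the $L^1$ argument above (Scheffé) $(Z_n^2)$ is uniformly integrable. Hence $(U_n^2)$ is uniformly integrable as well. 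Combined with $U_n^2\xrightarrow{P}1$, this gives $\E U_n^2\to1$.

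The only delicate step is the uniform integrability of $(Z_n^2)$ from quadratic-mean convergence, and this is handled as described. A direct alternative is to note that $U_n\to1$ in $L^2$ follows from Vitali's theorem applied to the dominated family.
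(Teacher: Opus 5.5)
Your proof is correct, and its structure matches the paper's. For the direction from expectations to probability you use Markov's inequality plus a split on the event where $Y_n/\E(Y_n)$ is small. For the converse and for (**) you use uniform integrability of $Z_n=Y_n/\E(Y_n)$, transferred to the dominated sequence. Your (**) is essentially the paper's argument, with $\E U_n^2\to1$ in place of $\E(U_n-1)^2\to0$.

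The one substantive difference is where you get uniform integrability of $Z_n$ in the equivalence (*). You derive it from $Z_n\geq0$, $\E Z_n=1$ and $Z_n\xrightarrow{P}1$ alone, via the Scheffé identity $\E|Z_n-1|=2\E(1-Z_n)^+\to0$. The paper instead invokes convergence of $Z_n$ in quadratic mean, but that is a hypothesis only of the second part of the lemma. So your argument proves (*) under exactly its stated assumption.

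That is also what the paper actually needs. In the proof of Theorem \ref{thm:rdmain}, the first part is applied with $Y_n=(\size{\G_n})^2$. There the hypothesis on $\size{\G_n}$ yields convergence of $Y_n/\E(Y_n)$ to $1$ in probability (in fact in $L^1$), but not in quadratic mean.

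One small adjustment: when you reuse the Scheffé step for $Z_n^2$ in (**), write $\E|Z_n^2-1|=(\E Z_n^2-1)+2\E(1-Z_n^2)^+$. This is needed because there $\E Z_n^2\to1$ rather than $\E Z_n^2=1$.
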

\begin{theoremproof}{\ref{thm:main}}
	The proof is by the second-moment method \cite{alonspencer}. Let $\widebar{M}_i(G_n)$ and $\sigma^2_i(G_n)$ be the mean and the variance of $M_i(G_n)$ as given by Theorem \ref{thm:moments} and, likewise, referring to the same theorem, let $\widebar{L}(G_n)$, $\widebar{M}(G_n)$ and $\sigma^2(G_n)$ be the first moments of $L(G_n)$ and $M(G_n)$, respectively, and their common variance. For ease of notation we write $\Sigma_{2,n}$ for $\Sigma_2(G_n)$. %Let $\boldsymbol{\gamma}_n=\left(\gamma_{1,n},\ldots,\gamma_{s,n}\right)=n^{-1}\col_n(s)$ be the color-distribution vector, and recall that $\boldsymbol{\upsilon}_s$ is the center $\boldsymbol{1}_s/s$ of $\boldsymbol{\Delta}_s$. 
	In view of Theorem \ref{thm:moments}, the assumptions 
	$$\liminf_n\min_i\gamma_{i,n}>0\quad\text{and}\quad m_n\to\infty$$
	guarantee that none of the first moments above is $o(1)$. Moreover, the first of such assumptions also grants that $\boldsymbol{\gamma}_n$ lies in the relative interior of $\boldsymbol{\Delta}_s$. In particular, the sequence $(\gamma_{i,n})_{n\geq 1}$ is bounded away from zero for every $i\in [s]$. To prove \ref{com:i}, it is enough to establish relative concentration for every $M_i(G_n)$ and every $M_{i,j}(G_n)$. We first prove that
	\[\mydfrac{M_i(G_n)}{\widebar{M}_i(G_n)}\xrightarrow[n\to\infty]{\phantom{-} p\phantom{-}} 1\qquad \forall i\in [s].
	\]
	By Chebyshev’s inequality, a sufficient condition for $M_i(G_n)/\widebar{M}_i(G_n)$ to converge to $1$ in probability is 
	\begin{equation}\label{eq:suffalon}
		\sigma^2_i(G_n)=o\left(\widebar{M}_i^2(G_n)\right).
	\end{equation}
	By Lemma \ref{lem:asymtotics}.\ref{com:a}, and the definition of asymptotic equivalence, one has 
	$$\mydfrac{\kns{c}{i}{2}}{\kn{n}{2}}\sim \gamma_{i,n}^2\quad\text{and}\quad \widebar{M}_i(G_n)=m_n\mydfrac{\kns{c}{i}{2}}{\kn{n}{2}}\sim m_n\gamma_{i,n}^2,$$
	where we used the fact that the sequence $(\gamma_{i,n})_{n\geq 1}$ is bounded away from zero as $n\to\infty$.
	Therefore,
	$$\widebar{M}_i^2(G_n)\sim m_n^2\gamma_{i,n}^4=\Theta(m_n^2)$$
	On the other hand, by Theorem \ref{thm:moments} and still by Lemma \ref{lem:asymtotics}.\ref{com:a} and Lemma \ref{lem:asymtotics}.\ref{com:a1}, it follows that  
	\[
	\begin{split}
		\sigma_i^2(G_n)&=\mydfrac{\kns{c}{i}{3}(n-c_i)}{\kn{n}{4}}\Sigma_{2,n}+m_n^2\Big[\mydfrac{\kns{c}{i}{4}}{\kn{n}{4}}-\Big(\mydfrac{\kns{c}{i}{2}}{\kn{n}{2}}\Big)^2\Big]\\
		&\quad+m_n\Big[\mydfrac{\kns{c}{i}{2}}{\kn{n}{2}}-2\mydfrac{\kns{c}{i}{3}(n-c_i)}{\kn{n}{4}}-\mydfrac{\kns{c}{i}{4}}{\kn{n}{4}}\Big]\\
		&=(\gamma_{i,n}^3(1-\gamma_{i,n})+O(n^{-1}))\Sigma_{2,n}+O(m_n^2/n)+O(m_n).
	\end{split}
	\] 
	so that, using that $\zeta_n=o(1)\Rightarrow \zeta^2_n=o(1)$, one has
	\[
	\mydfrac{\sigma_i^2(G_n)}{\widebar{M}_i^2(G_n)}=O\left(\mydfrac{\Sigma_{2,n}}{m_n^2}\right)=O(\zeta^2_n)=o(1).
	\]
	Therefore $\sigma_i^2(G_n)=o\left(\widebar{M}_i^2(G_n)\right)$. The very same argument proves that $M_{i,j}(G_n)$ concentrates around its expected value and the proof of \ref{com:i} is completed.
	
	We now prove \ref{com:ii}. Let $\sigma^2(G_n)$ denote the common variance of $M(G_n)$ and $L(G_n)$. By Theorem~\ref{thm:moments},
	$$\sigma^2(G_n)=(a_n-b_n)\Sigma_2(G_n)+m_n^2\Big(b_n-4\Big[\mydfrac{e_{2,n}}{\kn{n}{2}}\Big]^2\Big)+m_n\Big(\mydfrac{2e_{2,n}}{\kn{n}{2}}-2a_n+b_n\Big).$$
	Dividing by $m_n^2$ and applying Lemma \ref{lem:asymtotics}.\ref{com:b}--\ref{com:c} yields the following variance identity
	\[
	\begin{split}
		\mydfrac{\sigma^2(G_n)}{m^2_n}&=(a_n-b_n)\zeta_n^2+\Big(b_n-4\Big[\mydfrac{e_{2,n}}{\kn{n}{2}}\Big]^2\Big)+\mydfrac{1}{m_n}\Big(\mydfrac{2e_{2,n}}{\kn{n}{2}}-2a_n+b_n\Big)\\
		&=\rho_n\zeta_n^2+o(1),
	\end{split}
	\]
	where $\rho_n=P_3(\boldsymbol{\gamma}_n)-P_2(\boldsymbol{\gamma}_n)^2$. Since $\Sigma_2(G_n)\leq2m_n^2$, we have $\zeta_n^2\leq2$. Hence, if $\boldsymbol{\gamma}_n\to\boldsymbol{\upsilon}_s$, then $\rho_n\to0$ and therefore $\sigma^2(G_n)=o(m_n^2)$, without any further assumption on $\zeta_n$. Under the assumption that $\boldsymbol{\gamma}_n$ lies in the relative interior of the standard simplex, Theorem \ref{thm:moments} yields
	$$\widebar{M}(G_n)=\Theta(m_n)\quad\text{and}\quad\widebar{L}(G_n)=\Theta(m_n),$$
	so that Chebyshev's inequality proves relative concentration of both variables.
	
	Conversely, assume $\zeta_n=\Theta(1)$ and that either $M(G_n)$ or $L(G_n)$ concentrates relatively. Let $X_n\in\{M(G_n),L(G_n)\}$ denote the corresponding random variable and treat $Y_n=m_n$ as a deterministic random variable. Hence $0\leq X_n\leq Y_n$, $Y_n/\E(Y_n)=1$, and $\E(X_n)=\Theta(m_n)$. Consequently, $\E(X_n)\geq\tau\E(Y_n)$ for some $\tau>0$ and all sufficiently large $n$. Applying \eqref{eq:dast} in Lemma \ref{lem:convpro} yields
	$$\mydfrac{\var{X_n}}{[\E(X_n)]^2}\to 0.$$
	Since $\E(X_n)=\Theta(m_n)$, it follows that
	$$\mydfrac{\var{X_n}}{m_n^2}\to 0.$$
	The variance identity above and $\zeta_n^2=\Theta(1)$ imply $\rho_n\to0$. Lemma \ref{lem:asymtotics}.\ref{com:c} then implies $\boldsymbol{\gamma}_n\to\boldsymbol{\upsilon}_s$, otherwise there would be a subsequence along which $\|\boldsymbol{\gamma}_n-\boldsymbol{\upsilon}_s\|=\Theta(1)$ and hence $\rho_n=\Theta(1)$.
	
	Finally, suppose that $\liminf_n\|\boldsymbol{\gamma}_n-\boldsymbol{\upsilon}_s\|>0$. By Lemma \ref{lem:asymtotics}.\ref{com:c}, the assumption $\zeta_n=\Theta(1)$, and the variance identity above, there is a constant $c>0$ such that
	$$\sigma^2(G_n)\geq c m_n^2$$
	for all sufficiently large $n$. Let
	$$Z_n:=\mydfrac{|M(G_n)-\widebar{M}(G_n)|}{m_n}.$$
	Hence $0\leq Z_n\leq 1$, $0\leq Z^2_n\leq Z_n$, and
	$$\E(Z_n)\geq\E(Z_n^2)=\mydfrac{\sigma^2(G_n)}{m_n^2}\geq c.$$
	Since $\E(Z_n)\leq\epsilon+\pra{Z_n>\epsilon}$ for any $\epsilon$ such that $0\leq \epsilon \leq 1$, choosing $\epsilon=c/2$ yields
	$$\pra{|M(G_n)-\widebar{M}(G_n)|>\epsilon m_n}\geq c/2.$$
	The same is true for $L(G_n)$ because
	$$L(G_n)-\widebar{L}(G_n)=-\bigl(M(G_n)-\widebar{M}(G_n)\bigr).$$
	This proves \ref{com:ii}.
\end{theoremproof}
\section{Examples}\label{sec:examples} 
We first give two deterministic examples and then illustrate and prove Theorem \ref{thm:rdmain}.
\subsection{Regular graphs} If $G_n$ is regular, then $\Sigma_2(G_n)=4m_n^2/n$. Hence $\zeta_n^2=4/n$. Thus all the statistics in Theorem \ref{thm:main} concentrate, independently of the density of $G_n$. For $G_n=K_n$ they are deterministic: in particular, $L(G_n)$ is the number of edges of the complete multipartite graph with part sizes $c_{1,n},\ldots,c_{s,n}$.

\subsection{Stars} Let $G_n\cong K_{1,n}$. In this case $\zeta^2_n=1+\frac{1}{n-1}=\Theta(1)$. Fix $s\geq 2$ and let $\col_n(s)=(c_{1,n},\ldots,c_{s,n})$ be a composition of $n$ and let $\boldsymbol{\gamma}_n$ be the corresponding color distribution. Let $\xi_n$ be the non-leaf vertex of $G_n$ and observe that the number of $2$-chromatic edges in $G_n$ is uniquely determined by the color of the center $\xi_n$: if $\xi_n$ receives color $i$ under a random coloring $f$, then $L(G_n)=n-c_{i,n}=n(1-\gamma_{i,n})$. Therefore,
$$L(G_n)=n\sum_{i=1}^s(1-\gamma_{i,n})\uno\left(f\in \Phi_\col \mid f(\xi_n)=i\right).$$
If $c_{i,n}=n/s$, then $L(G_n)$ assumes the value $n(1-\frac{1}{s})$ with probability 1. Consequently, the random variable degenerates and coincides with its mean. We can thus suppose that $\boldsymbol{\gamma}_n$ remains bounded away from the uniform vector $\boldsymbol{\upsilon}_s$.
Since $\pra{\{f\in \Phi_\col \ |\ f(\xi_n)=i\}}=\gamma_{i,n}$, which can be easily checked directly or after resorting to Lemma \ref{lem:counting} using the event $\mathcal{E}(\{\xi_n\},\iota)$ with $\iota(i)=i$, it follows that
$$\widebar{L}(G_n)=\Ea{L(G_n)}=n(1-\|\boldsymbol{\gamma}_n\|^2)$$
and it can be checked that this expression specializes the one given in Theorem \ref{thm:moments}. The hypothesis $\liminf_{n\to\infty}\min_{i\in [s]}\frac{c_{i,n}}{n}$ in Theorem \ref{thm:main} grants that $\gamma_{i,n}>\epsilon$ for some $\epsilon>0$ and for every $i\in [s]$. Possibly by permuting the entries of $\boldsymbol{\gamma}_n$ we may suppose that $\gamma_{1,n}\leq\gamma_{2,n}\leq\cdots\leq\gamma_{s,n}$. Since $\boldsymbol{\gamma}_n\in \boldsymbol{\Delta}_s$, $\boldsymbol{\gamma}_n$ as a probability distribution on $[s]$. Therefore, using that 
$$\|\boldsymbol{\gamma}_n\|^2=\sum_{i=1}^s\gamma^2_{i,n}=\sum_{i=1}^s\gamma_{i,n}\gamma_{i,n}$$
yields
$$\gamma_{1,n}<\|\boldsymbol{\gamma}_n\|^2<\gamma_{s,n}$$
where the strict inequalities are implied by the assumption that $\boldsymbol{\gamma}_n\neq\boldsymbol{\upsilon}_s$. Let  
$$\delta_n=\min\left\{\|\boldsymbol{\gamma}_n\|^2-\gamma_{1,n},\gamma_{s,n}-\|\boldsymbol{\gamma}_n\|^2\right\}$$
and, for $\delta>0$, let
$$A_\delta=\left\{i\in [s] \ |\ |n(1-\gamma_{i,n})-n(1-\|\boldsymbol{\gamma}_n\|^2)|> \delta\right\}.$$
Observe that $\{1,s\}\subseteq A_\delta$ for every $\delta$ such that $0<\delta<\delta_n$. Therefore    
\[
\begin{split}
\pra{|L(G_n)-\widebar{L}(G_n)|\geq \delta}=\pra{L(G_n)\in A_\delta}\geq \gamma_1+\gamma_s>0
\end{split}
\]
and we conclude that $L(G_n)$ remains bounded away from its mean with positive probability. 
\subsection{Random Graphs} 
Now let the underlying simple graph $\G_n$ be random and colored independently according to $\mathsf{R}_{\col_n,n}$. Then $M_i(\G_n)$, $M(\G_n)=\sum_{i=1}^sM_i(\G_n)$, $L(\G_n)=\size{\G_n}-M(\G_n)$, and $\zeta(\G_n)$ are random variables. Theorem \ref{thm:rdmain} recovers the deterministic dichotomy under the following sufficient condition for concentration of $\size{\G_n}$.
\begin{prop}\label{prop:star}
	Let $(\G_n)_{n\geq 1}$ be a sequence of random graphs. Assume that $\size{\G_n}$ can be written as the sum of $\Omega(n)$ independent, possibly non-identically distributed, random variables with finite positive means $\mu_j$ and finite variances $\tau_j$. If the $\tau_j$ are $O(1)$ and the arithmetic mean of the $\mu_j$ remains bounded away from zero, then assumption \ref{com:ast} is satisfied.
\end{prop}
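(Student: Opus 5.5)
We need to verify assumption (*): $\E(\size{\G_n})\to\infty$ and $\size{\G_n}/\E(\size{\G_n})\to 1$ in quadratic mean. Write $\size{\G_n}=\sum_{j=1}^{N_n}X_j$, where the $X_j$ are independent, $N_n=\Omega(n)$, $\E(X_j)=\mu_j>0$ and $\var{X_j}=\tau_j\le T$ for some constant $T$. By hypothesis there is $\mu>0$ with $N_n^{-1}\sum_j\mu_j\ge\mu$ for all large $n$.

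The first step is the mean. By linearity, $\E(\size{\G_n})=\sum_j\mu_j\ge \mu N_n$. Since $N_n=\Omega(n)$, this gives $\E(\size{\G_n})=\Omega(n)$, which tends to infinity.

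The second step is the variance. Convergence in quadratic mean to $1$ means
$$\E\Big[\Big(\frac{\size{\G_n}}{\E(\size{\G_n})}-1\Big)^2\Big]=\frac{\var{\size{\G_n}}}{[\E(\size{\G_n})]^2}\to 0.$$
By independence, $\var{\size{\G_n}}=\sum_j\tau_j\le T N_n$. Combining with the mean bound,
$$\frac{\var{\size{\G_n}}}{[\E(\size{\G_n})]^2}\le\frac{TN_n}{\mu^2N_n^2}=\frac{T}{\mu^2N_n}\to 0,$$
because $N_n\to\infty$.

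The argument is just Chebyshev-type bookkeeping, and the only point needing care is interpreting the hypotheses correctly. ``Arithmetic mean bounded away from zero'' must be read as a uniform-in-$n$ lower bound $\mu$ on the averages. The number of summands $N_n$ may depend on $n$, and the variables may form a triangular array, which is harmless since independence is only used within each fixed $n$. If the $X_j$ are edge indicators, or more generally nonnegative, positivity of the means is automatic; it is otherwise only used to ensure the total mean is positive, so that the ratio is well defined.
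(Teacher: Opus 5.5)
Your proof is correct and follows the paper's argument essentially verbatim. Linearity gives $\E(\size{\G_n})=\sum_j\mu_j\geq\mu N_n\to\infty$, independence gives $\var{\size{\G_n}}=\sum_j\tau_j\leq TN_n$, and so the quadratic-mean deviation is at most $T/(\mu^2N_n)=O(1/N_n)\to 0$.
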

\begin{proof}
	Let $\size{\G_n}=\sum_{j=1}^{N_n}X_j$ where $N_n=\Omega(n)$, the $X_j$s are independent, and $X_j$ has mean $\mu_j>0$ and finite variance $\tau_j$. Hence $\E(\size{\G_n})=\sum_j\mu_j$ and $\var{\size{\G_n}}=\sum_j\tau_j$. Let $\widebar{\mu}=N_n^{-1}\sum_j\mu_j$ and $\tau^*=\max_{1\leq j\leq N_n}\tau_j$. Since $\widebar{\mu}>C$ for some positive constant $C$, it follows that $\E(\size{\G_n})=\Omega(N_n)$. Moreover,
	$$\E\left(\mydfrac{\size{\G_n}}{\E(\size{\G_n})}-1\right)^2=\mydfrac{\var{\size{\G_n}}}{[\E(\size{\G_n})]^2}\leq \mydfrac{1}{N_n}\mydfrac{\tau^*}{\widebar{\mu}^2}=O\left(\mydfrac{1}{N_n}\right).$$
	We therefore conclude that $\E(\size{\G_n})\to\infty$ and $\size{\G_n}/\E(\size{\G_n})\to 1$ in quadratic mean, namely, assumption \ref{com:ast} is fulfilled.
\end{proof}
Let us now examine concrete instances of random graphs.
\begin{itemize}
	\item[--] Let $\G_n\sim\G_{n,p_n}$ with $n^2p_n\to\infty$. Then assumption \ref{com:ast} holds because $\size{\G_n}\sim \operatorname{Bin}(\binom{n}{2},p_n)$. Moreover,
	$$[\Ea{\size{\G_n}}]^2=\Theta(n^4p_n^2),\qquad \Ea{\Sigma_2(\G_n)}=O(n^2p_n+n^3p_n^2),$$
	so the ratio in \eqref{eq:eqpsuffcond} in Theorem \ref{thm:rdmain} is $O((n^2p_n)^{-1}+n^{-1})=o(1)$. Hence $M(\G_n)$ concentrates; in particular this includes $np_n\to\lambda>0$. Equivalently, conditional on the color classes, $M_i(\G_n)\sim \operatorname{Bin}({c_i\choose 2},p)$ and $M(\G)=\sum_i^sM_i(\G_n)$.
	\item[--] Let $\G_n$ be a random geometric graph on the two-dimensional torus with fixed radius $r$. Let $r$ be chosen below the injective radius, namely small enough to grant that balls of radius $r$ on the torus are isometric to Euclidean balls of the same radius. Write $p$ for the area of a ball of radius $r$. Hence $p=\pi r^2$. 
	Let $X_1,\ldots,X_n$ be the independent uniform points that generate the graph, set $\mathcal{e}_{ij}=\uno(d(X_i,X_j)\leq r)$. Indicators corresponding to vertex-disjoint unordered pairs of indices are independent. If two distinct pairs of indices share an element, say $ij$ and $ik$, then, conditionally on $X_i$,
	\[
	\pra{\mathcal{e}_{ij}=\mathcal{e}_{ik}=1\mid X_i}
	=\pra{X_j\in B(X_i,r)\mid X_i}\pra{X_k\in B(X_i,r)\mid X_i}=p^2,
	\]
	because every ball of radius $r$ has area $p$. Thus any two distinct edge indicators are independent, although the whole family is not jointly independent. Consequently,
	\[
	\Ea{\size{\G_n}}=\binom{n}{2}p,
	\qquad
	\var{\size{\G_n}}=\binom{n}{2}p(1-p).
	\]
	Moreover, conditionally on $X_i$, the indicators $(\mathcal{e}_{ij})_{j\neq i}$ are independent Bernoulli random variables with parameter $p$, which does not depend on $X_i$. Hence
	\[
	d_{\G_n}(i)\sim\operatorname{Bin}(n-1,p)
	\quad\text{and}\quad
	\Ea{\Sigma_2(\G_n)}
	=n\bigl((n-1)p(1-p)+(n-1)^2p^2\bigr).
	\]
	For fixed $p>0$, assumption \ref{com:ast} is fulfilled and the ratio in \eqref{eq:eqpsuffcond} is $O(n^{-1})$. Therefore Theorem \ref{thm:rdmain} implies concentration.
	\item[--] More generally, for an independent-edge model satisfying assumption \ref{com:ast}, Theorem \ref{thm:rdmain} shows that $M(\G_n)$ fails to concentrate whenever
	$$\liminf_{n\to\infty}\mydfrac{\Ea{\Sigma_2(\G_n)}}{[\Ea{\size{\G_n}}]^2}>0.$$
	This behavior occurs in a Chung--Lu model (see \cite{chunglu}) with weights $w_1=n$ and $w_i=1$ for $i\geq2$. Writing $W_n=\sum_kw_k=2n-1$, the edge probabilities are
	$$p_{i,j}=\mydfrac{w_iw_j}{W_n}=\begin{cases} 1/2+O(1/n) & \text{if $1\in\{i,j\}$},\\1/(2n)+O(1/n^2) & \text{otherwise}.
	\end{cases}$$
	Here $\Ea{\size{\G_n}}=\Theta(n)$ and $\var{\size{\G_n}}=O(n)$, so assumption \ref{com:ast} holds. Moreover, the degree of vertex $1$ has second moment $\Theta(n^2)$, and hence the ratio in \eqref{eq:eqpsuffcond} is bounded away from zero.
\end{itemize}
We now prove Theorem \ref{thm:rdmain}, using Lemma \ref{lem:convpro}.
\mybreak
\begin{theoremproof}{\ref{thm:rdmain}}
	Set
\[
q_n:=\mydfrac{1}{\kn{n}{2}}\sum_{i=1}^s\kns{c}{i}{2},\quad
A_n:=a_n-b_n,\quad
B_n=b_n-4\left(\mydfrac{e_{2,n}}{\kn{n}{2}}\right)^2,\quad
\text{and}\quad 
C_n=\mydfrac{2e_{2,n}}{\kn{n}{2}}-2a_n+b_n.
\]
Conditionally on $\G_n$, Theorem \ref{thm:moments} gives the identities
\[
\Ea{M(\G_n)\mid\G_n}=q_n\size{\G_n}
\]
and
\[
\var{M(\G_n)\mid\G_n}=A_n\Sigma_2(\G_n)+B_n(\size{\G_n})^2+C_n\size{\G_n}.
\]
Because the color-distribution vectors stay away from the boundary and from $\boldsymbol{\upsilon}_s$, Lemma \ref{lem:asymtotics} implies that $q_n=\Theta(1)$, $A_n=\rho_n+O(n^{-1})$ with $\rho_n$ bounded from above and away from zero, $B_n=O(n^{-1})$, and $C_n=O(1)$. Let $\mu_n=\Ea{\size{\G_n}}$. The laws of total expectation and total variance yield
\[
\Ea{M(\G_n)}=q_n\mu_n
\]
and
\begin{equation}\label{eq:random-total-variance}
	\begin{split}
		\var{M(\G_n)}={}&A_n\Ea{\Sigma_2(\G_n)}+B_n\Ea{(\size{\G_n})^2}+C_n\mu_n+q_n^2\var{\size{\G_n}}.
	\end{split}
\end{equation}
Assumption \ref{com:ast} gives
\[
\frac{\var{\size{\G_n}}}{\mu_n^2}\to0,
\qquad
\frac{\Ea{(\size{\G_n})^2}}{\mu_n^2}\to1,
\qquad
\frac{\mu_n}{\mu_n^2}\to0.
\]
If \eqref{eq:eqpsuffcond} holds, dividing \eqref{eq:random-total-variance} by $\mu_n^2$ yields
\[
\frac{\var{M(\G_n)}}{[\Ea{M(\G_n)}]^2}\to0,
\]
and Chebyshev's inequality proves concentration.

Conversely, suppose that
\[
\frac{M(\G_n)}{\Ea{M(\G_n)}}\xrightarrow{P}1.
\]
Since $0\leq M(\G_n)\leq\size{\G_n}$ and $\Ea{M(\G_n)}=q_n\mu_n$ with $q_n$ bounded away from zero, the second part of Lemma \ref{lem:convpro}, applied with $X_n=M(\G_n)$ and $Y_n=\size{\G_n}$, gives
\[
\frac{\var{M(\G_n)}}{[\Ea{M(\G_n)}]^2}\to0.
\]
Therefore $\var{M(\G_n)}/\mu_n^2\to0$. Isolating the term containing $\Ea{\Sigma_2(\G_n)}$ in \eqref{eq:random-total-variance}, and using the preceding estimates together with the fact that $A_n$ is bounded away from zero, proves \eqref{eq:eqpsuffcond}.

It remains to prove the equivalence with \eqref{eq:psuffcond}. Assumption \ref{com:ast} also implies
\[
\frac{(\size{\G_n})^2}{\Ea{(\size{\G_n})^2}}\xrightarrow{P}1
\quad\text{and}\quad
\frac{\Ea{(\size{\G_n})^2}}{\mu_n^2}\to1.
\]
Moreover, $\Sigma_2(\G_n)\leq2(\size{\G_n})^2$ almost surely. The first part of Lemma \ref{lem:convpro}, applied with $X_n=\Sigma_2(\G_n)$ and $Y_n=(\size{\G_n})^2$, therefore shows that \eqref{eq:eqpsuffcond} is equivalent to
\[
\frac{\Sigma_2(\G_n)}{(\size{\G_n})^2}=\zeta^2(\G_n)\xrightarrow{P}0.
\]
By the continuous mapping theorem, this is equivalent to \eqref{eq:psuffcond}. This completes the proof.
\end{theoremproof}
\section{Concluding remarks}
in this paper, we have established necessary and sufficient conditions for the concentration of counts of monochromatic and bichromatic edges in both randomly colored graphs and randomly colored random graphs. The resulting criterion for concentration depends on the parameter $\zeta(G)=\sqrt{\Sigma_2(G)}/\size{G}$. This problem is motivated by the study of induced subgraph statistics and by the practical need to assess homophily and community structures in network science. A natural extension of this work would be to consider different random coloring mechanisms and, ultimately, to allow for dependence between the random graph and its coloring.

\section*{Appendix}
\subsection{Proof of Lemma \ref{lem:counting}}
For a $\col$-coloring $f\in \Phi_\col$ and a set $U\subseteq V(G)$, we denote by $f(U)$ the set $\{f(u) \ |\ u\in U\}$, namely the set of distinct colors assumed by the vertices in $U$. We preliminarily need the following useful lemma about the symmetric elementary polynomials. 
\begin{lemma}\label{lem:elemetarysymm}
	Let $k$ and $s$ be positive integers and $E_k\in \mathbb{Z}[X_1,\ldots,X_s]$ be the elementary symmetric polynomial in $s$ indeterminates of degree $k$. Let $\widetilde{E}_k=E_k(X_1^2,\ldots,X_s^2)$. Then, the following identities hold in $\mathbb{Z}[X_1\ldots,X_s]$:
	$$\sum_{|I|=k}\left(\sum_{i\in I}X_i-k\right)\prod_{j\in I}X_j=(E_1-k)E_k-(k+1)E_{k+1},$$
	where $I$ ranges over the $k$-subsets of $[s]$, and
	$$\widetilde{E}_2=E_2^2-2E_1E_3+2E_4.$$
\end{lemma}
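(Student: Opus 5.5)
Both identities are polynomial identities in $\mathbb{Z}[X_1,\ldots,X_s]$, and I would prove them by direct expansion and coefficient comparison, treating them separately.

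For the first identity, I split the left-hand side as
$$\sum_{|I|=k}\Big(\sum_{i\in I}X_i\Big)\prod_{j\in I}X_j\;-\;k\sum_{|I|=k}\prod_{j\in I}X_j .$$
The second piece is $kE_k$. For the first piece, I compare it with $E_1E_k=\sum_{i\in[s]}X_i\sum_{|I|=k}\prod_{j\in I}X_j$. The terms of $E_1E_k$ are indexed by pairs $(i,I)$. When $i\in I$, the monomial is $X_i\prod_{j\in I}X_j$, and these terms form exactly the first piece. When $i\notin I$, the monomial is $\prod_{j\in I\cup\{i\}}X_j$, a squarefree monomial of degree $k+1$. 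Each $(k+1)$-set $J$ arises from exactly $k+1$ such pairs, namely one choice of $i\in J$. Hence the terms with $i\notin I$ sum to $(k+1)E_{k+1}$, and the first piece equals $E_1E_k-(k+1)E_{k+1}$. Combining the two pieces gives $(E_1-k)E_k-(k+1)E_{k+1}$. This argument also covers the boundary cases: if $k=s$ then $E_{k+1}=0$, consistent with the convention in the Preliminaries.

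For the second identity, I expand
$$E_2^2=\Big(\sum_{i<j}X_iX_j\Big)^2$$
by classifying ordered pairs of $2$-subsets $(\{i,j\},\{k,l\})$ by the size of their intersection.
\begin{enumerate}
\item Equal pairs contribute $\sum_{i<j}X_i^2X_j^2=\widetilde{E}_2$.
\item Pairs sharing exactly one index contribute monomials $X_a^2X_bX_c$ with $b<c$ distinct from $a$. Each such monomial appears twice, by order, so these pairs contribute $2\sum_{a}\sum_{b<c,\;b,c\neq a}X_a^2X_bX_c$.
\item Disjoint pairs contribute squarefree monomials of degree $4$. Each $4$-set splits into $3$ unordered pairs of $2$-sets, hence $6$ ordered pairs, so these contribute $6E_4$.
\end{enumerate}
Next I compute $E_1E_3$ the same way. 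A pair $(a,T)$ with $a\in T$ gives $X_a^2X_bX_c$, each monomial exactly once. A pair with $a\notin T$ gives a squarefree monomial, each $4$-set obtained $4$ times. Thus
$$E_1E_3=\sum_{a}\sum_{b<c,\;b,c\neq a}X_a^2X_bX_c+4E_4 .$$
Substituting this into the expansion of $E_2^2$ gives
$$E_2^2=\widetilde{E}_2+2(E_1E_3-4E_4)+6E_4=\widetilde{E}_2+2E_1E_3-2E_4,$$
which is the claim.

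The only delicate point is the multiplicity count in each class: the factor $6$ for disjoint pairs versus $4$ in $E_1E_3$, and the factor $2$ for ordered pairs sharing one index. As a safeguard, I would check both identities numerically at a small point such as $s=4$, $X=(1,1,1,1)$. There $E_2=6$, $E_3=4$, $E_4=1$, $\widetilde{E}_2=6$, and indeed $36-32+2=6$.
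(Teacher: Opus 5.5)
Your proof is correct and follows the paper's argument essentially step for step. For the first identity you split $E_1E_k$ into pairs $(i,I)$ with $i\in I$ (the paper's $\sum_I S_I$) and pairs with $i\notin I$ (giving $(k+1)E_{k+1}$); for the second you classify ordered pairs of $2$-subsets by intersection size, with the same multiplicities $1,2,6$. The only cosmetic difference is that you recompute $E_1E_3=\sum_{a}\sum_{b<c,\;b,c\neq a}X_a^2X_bX_c+4E_4$ directly, whereas the paper gets the same relation by citing the first identity with $k=3$.
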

\begin{proof}
	For $I\subseteq [s]$ with $|I|=k$ set 
	$$S_I:=S_I(X_1\ldots,X_s)=\sum_{i\in I}X_i^2\prod_{j\in I\setminus\{i\}}X_j.$$ 
	Expanding the polynomial on the left hand side of the stated identity gives
	$$\sum_{|I|=k}\left(\sum_{i\in I}X_i-k\right)\prod_{j\in I}X_j=\sum_{|I|=k}S_I-kE_k.$$
	Multiply $E_k$ by $E_1$ and observe that the monomials in the product $E_1E_k$ are of two types: either square-free $k+1$-tuples, or $k$-tuples in which exactly one indeterminate is squared. The same square-free $k+1$-tuple appears in exactly $k+1$ products of the form $X_i\prod_{j\neq i}X_j$ while the monomials of the second type are precisely the $S_I$. Therefore  
	$$E_1E_k=(k+1)E_{k+1}+\sum_{|I|=k}S_I$$
	and subtracting $kE_k+(k+1)E_{k+1}$ from both side yields the first identity. For the second identity let 
	$$P=\{\{i,j\},\{h,k\}\subseteq [s] \mid |\{i,j\}\cap\{h,k\}|=1\}$$ 
	and 
	$$Q=\{\{i,j\},\{h,k\}\subseteq [s] \mid |\{i,j\}\cap\{h,k\}|=0\}.$$ 
	Expanding $E_2^2$ yields:
	\[\begin{split}
		E_2^2&=\left(\sum_{i<j}X_iX_j\right)^2=\left(\sum_{i<j}X_iX_j\right)\left(\sum_{h<k}X_hX_k\right)\\
		&=\sum_{i<j}X^2_iX^2_j+2\sum_PX_iX_jX_hX_k+2\sum_QX_iX_jX_hX_k\\
		&=\widetilde{E}_2+2\hspace{-0.6em}\sum_{i<j<h}\hspace{-0.5em}X_iX_jX_h(X_i+X_j+X_h)+2\cdot 3\hspace{-1.1em}\sum_{i<j<h<k}\hspace{-0.8em}X_iX_jX_hX_k,
	\end{split}\]
	where the coefficient $3$ in the last term accounts for the three partitions of a $4$-set into two disjoint $2$-sets. Next, by the first identity of the lemma,
	\[
	\begin{split}
		\sum_{i<j<h}\hspace{-0.5em}X_iX_jX_h(X_i+X_j+X_h)&=\sum_{i<j<h}\hspace{-0.5em}X_iX_jX_h(X_i+X_j+X_h\pm3)\\
		&=(E_1-3)E_3-4E_4+3E_3\\
		&=E_1E_3-4E_4.
	\end{split}
	\]
	Thus 
	$$\widetilde{E}_2=E_2^2-2(E_1E_3-4E_4)-6E_4=E_2^2-2E_1E_3+2E_4$$
	which is the desired identity.
\end{proof}   

\begin{lemmaproof}{\ref{lem:counting}}
Let us first prove the special cases \eqref{eq:monedge}, \eqref{eq:biedge} and \eqref{eq:severlaedges} assuming the first two formulas. If $k=1$, let $A_1=A$ and $a_1=a$. Also let $\iota(1)=i$. Then 
	$$\mydfrac{1}{\kn{n}{a}}\prod_{j\in [k]}\kns{c}{\iota(j)}{a_j}=\mydfrac{\kns{c}{i}{a}}{\kn{n}{a}},$$
	namely, \eqref{eq:monedge}. Let us prove \eqref{eq:biedge}. Let $I$ be any $k$-subset of $[s]$ and let $\bar{j}\in I$ be fixed. Then, for every $\iota\in \Inj([k],[s])$ such that $\iota(1)=\bar{j}$, one has
	$$\prod_{j\in I\setminus\{\bar{j}\}}c_j=\prod_{j\in [k]\setminus \{1\}}c_{\iota(j)}.$$
	Hence, 	
	\[\begin{split}
		\sum_{\iota\in \Inj([k],[s])}\prod_{j\in [k]}\kns{c}{\iota(j)}{a_j}&=(k-1)!\sum_{|I|=k}\sum_{i\in I}(c_i-1)\prod_{j\in I}c_j\\
		&=(k-1)!\sum_{|I|=k}\left(\sum_{i\in I}c_i-k\right)\prod_{j\in I}c_j.
	\end{split}\]
	Thus, by the first identity of Lemma \ref{lem:elemetarysymm},
	$$\sum_{\iota\in \Inj([k],[s])}\prod_{j\in [k]}\kns{c}{\iota(j)}{a_j}=(k-1)!\Big[(e_1-k)e_k-(k+1)e_{k+1}\Big].$$
	Since $e_1=E_1(c_1,\ldots,c_s)=\sum_{j=1}^sc_j=n$, \eqref{eq:biedge} follows. Let us prove \eqref{eq:severlaedges}. Observe that, since $a_j=b$ for every $j\in [k]$, it follows that 
	$$\sum_{\iota\in \Inj([k],[s])}\prod_{j\in [k]}\kns{c}{\iota(j)}{a_j}=k!\sum_{B\subseteq [s], \#B=k}\prod_{j\in B}\kns{c}{j}{b}.$$
	Indeed, for each $B\subseteq [s]$ such that $|B|=k$ there are precisely $k!$ injections $\iota: [k]\to [s]$ such that $\range{\iota}=B$. The latter relation, yields \eqref{eq:severlaedges} upon dividing by $\kn{n}{a}$ and recalling the definition of degree $k$ elementary symmetric polynomial. For \eqref{eq:severlaedgesbuttwo}, expand
	$$\sum_{i<j}\kns{c}{i}{2}\kns{c}{j}{2}=\sum_{i<j}c_i^2c_j^2-\sum_{i<j}(c_i^2c_j+c_ic_j^2)+e_2.$$
	Lemma \ref{lem:elemetarysymm} gives
	$$\sum_{i<j}c_i^2c_j^2=e_2^2-2e_1e_3+2e_4,\qquad
	\sum_{i<j}(c_i^2c_j+c_ic_j^2)=e_1e_2-3e_3.$$
	Substituting and using $e_1=n$, yields \eqref{eq:severlaedgesbuttwo}.
	
	It remains to prove the two general probability formulas.
	For $\iota\in \Inj([k],[s])$, let $B(\iota)$ be the set
	$$B(\iota)=\left\{f\in \Phi_\col \ |\ f(v)=\iota(j),\,\forall v\in A_j,\,j=1,\ldots,k\right\}.$$
	A $\col$-coloring is favorable to the event $\mathcal{E}(A_1,\ldots,A_k,\iota)$ precisely when $f\in B(\iota)$. For $h=1,\ldots,s$, let 
	\[
	\tilde{c}_h(\iota)=\begin{cases}
		c_h &  \text{if $h\not\in \range{\iota}$}\\
		c_h-a_{\iota^{-1}(h)} & \text{if $h\in \range{\iota}$}. 
	\end{cases} 
	\]
	If $a_j>c_{\iota(j)}$ for some $j$, then $B(\iota)=\varnothing$. Otherwise $|B(\iota)|$ is the number of colorings of the remaining $n-a$ vertices with residual class sizes $\tilde c_h(\iota)$, namely
	$$|B(\iota)|=\frac{(n-a)!}{\tilde{c}_1(\iota)!\cdots\tilde{c}_s(\iota)!}.$$
	Since
	$$\mydfrac{c_{\iota(j)}!}{\tilde{c}_{\iota(j)}(\iota)!}=\kns{c}{\iota(j)}{a_j}\qquad(j\in[k]),$$
	we obtain
	\[
	\begin{split}
		\pra{\mathcal{E}(A_1,\ldots,A_k,\iota)}&=\left.|B(\iota)|\middle/\mydfrac{n!}{c_1!\ldots c_s!}\right.\\
		&=\mydfrac{(n-a)!}{n!}\mydfrac{c_1!\ldots c_s!}{\tilde{c}_1(\iota)!\cdots\tilde{c}_s(\iota)!}\\
		&=\mydfrac{1}{\kn{n}{a}}\prod_{j\in [k]}\kns{c}{\iota(j)}{a_j}	
	\end{split}
	\]
	as stated. Notice that the formula holds even if $a_j>c_{\iota(j)}$ for some $j\in [k]$ because, in that case, $\kns{c}{\iota(j)}{a_j}=0$. Next observe that  
	$$\mathcal{E}(A_1,\ldots,A_k)=\bigcup_{\iota\in \Inj([k],[s])}\mathcal{E}(A_1,\ldots,A_k,\iota)$$
	and the union is disjoint. Hence
	\[
	\pra{\mathcal{E}(A_1,\ldots,A_k)}=\sum_{\iota\in \Inj([k],[s])}\pra{\mathcal{E}(A_1,\ldots,A_k,\iota)}
	=\mydfrac{1}{\kn{n}{a}}\sum_{\iota\in \Inj([k],[s])}\prod_{j\in [k]}\kns{c}{\iota(j)}{a_j},
	\]
	which is precisely the formula stated in the lemma.  
\end{lemmaproof}
\subsection*{Proof of Lemma \ref{lem:convpro}}
For the foregoing notions we refer to \cite{billy,kalle}, especially Chapter 3 in \cite{kalle}. Recall that a sequence $(Y_n)_{n\geq 1}$ of random variables is uniformly integrable (see \cite{billy}) if for all $\epsilon>0$ there exists some $\ell>0$ such that:
$$\sup_n\E(|Y_n|\mathbf{1}_{\{|Y_n|>\ell\}})<\epsilon$$
which intuitively means that the mass of the variables dissipates on their tails and generalizes to sequences (and more generally to families) of random variables, the behavior of a single random variable with finite expectation: it is in fact known that $X\in L^1$ if and only if $\lim_{\ell\to\infty}\E(|X|\mathbf{1}_{|X|\geq\ell})=0$. i.e., for integrable random variables, far tails contribute negligibly to the expectation. Uniformly integrable sequences are thus those for which the magnitude of this contribution can be controlled uniformly over all elements. We need the following two properties of uniformly integrable sequences of random variables, the first of which follows by the very same definition of uniform integrability and the second one is a combination of Lemma 3.11 and Proposition 3.12 in \cite{kalle}.
\begin{lemma}~~
	\begin{enumerate}[label={\rm (U.\arabic*)}]
		\item\label{com:u1} Let $(X_n)_{n\geq 1}$, $(Y_n)_{n\geq 1}$ and $(Z_n)_{n\geq 1}$ be sequences of random variables such that $0\leq X_n\leq Y_n$ and with $(Z_n)_{n\geq 1}$ uniformly integrable. If $(Y_n)_{n\geq 1}$ is uniformly integrable so are $(X_n)_{n\geq 1}$ and $(aY_n+bZ_n)$ for every non-negative constant $a$ and $b$.
		\item\label{com:u2} Let $p\ge2$ be an integer and $X\in L^p$ a random variable. Furthermore let $(X_n)_{n\geq 1}$ be a sequence of random variables such that $X_n\in L^p$ with $X_n\xrightarrow{P} X$. Then the following statements are equivalent
		\begin{enumerate}[label={\rm (\alph*)}]
			\item\label{com:u2a} $X_n\to X$ in $L^p$;
			\item\label{com:u2b} $\|X_n\|_p\to\|X\|_p$;
			\item\label{com:u2c} $(|X_n|^p)_{n\geq 1}$ is uniformly integrable.
		\end{enumerate}
		Conversely, the first of the three statements implies that $X_n\xrightarrow{P} X$. Moreover, if $X_n\to X$ in distribution and $(X_n)_{n\geq 1}$ is uniformly integrable, then $\E(X_n)\to \E(X)$.
	\end{enumerate}	
\end{lemma}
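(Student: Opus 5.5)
The plan is to prove \ref{com:u1} directly from the definition of uniform integrability, and \ref{com:u2} by the standard truncation (Vitali-type) argument. The latter is what Lemma 3.11 and Proposition 3.12 in \cite{kalle} contain, so that part could simply be cited; the sketch below records how I would argue it. Throughout I read the variables in \ref{com:u1} as non-negative (or replace each variable by its absolute value), which is how the lemma is used in the proof of Lemma \ref{lem:convpro}.

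For \ref{com:u1}, take $0\leq X_n\leq Y_n$. Then $\{X_n>\ell\}\subseteq\{Y_n>\ell\}$, so pointwise $X_n\mathbf{1}_{\{X_n>\ell\}}\leq Y_n\mathbf{1}_{\{Y_n>\ell\}}$. Taking $\sup_n\E$ gives uniform integrability of $(X_n)$.
\begin{itemize}
\item Scaling is immediate: $aY_n\mathbf{1}_{\{aY_n>\ell\}}=a\,Y_n\mathbf{1}_{\{Y_n>\ell/a\}}$ for $a>0$.
\item For sums of non-negative $U,V$, I would use the pointwise inequality
$$(U+V)\mathbf{1}_{\{U+V>2\ell\}}\leq 2\max(U,V)\mathbf{1}_{\{\max(U,V)>\ell\}}\leq 2\left(U\mathbf{1}_{\{U>\ell\}}+V\mathbf{1}_{\{V>\ell\}}\right).$$
\end{itemize}
Applying this with $U=aY_n$, $V=bZ_n$ and taking suprema shows that $(aY_n+bZ_n)$ is uniformly integrable.

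For \ref{com:u2}, I would prove the cycle of implications (a)$\Rightarrow$(b)$\Rightarrow$(c)$\Rightarrow$(a).
\begin{itemize}
\item \textbf{(a)$\Rightarrow$(b).} This is Minkowski's inequality: $\bigl|\|X_n\|_p-\|X\|_p\bigr|\leq\|X_n-X\|_p$.
\item \textbf{(c)$\Rightarrow$(a).} Use $|X_n-X|^p\leq 2^{p-1}(|X_n|^p+|X|^p)$. A single integrable variable is uniformly integrable, so by \ref{com:u1} the family $(|X_n-X|^p)$ is uniformly integrable. It also tends to $0$ in probability. Splitting $\E|X_n-X|^p$ over the events $\{|X_n-X|^p\leq\eta\}$, $\{\eta<|X_n-X|^p\leq\ell\}$ and $\{|X_n-X|^p>\ell\}$ bounds it by $\eta+\ell\,\pra{|X_n-X|^p>\eta}+\sup_k\E\bigl(|X_k-X|^p\mathbf{1}_{\{|X_k-X|^p>\ell\}}\bigr)$. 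The middle term tends to $0$ for fixed $\eta,\ell$, and the outer terms are made small by the choice of $\eta$ and $\ell$.
\item \textbf{(b)$\Rightarrow$(c).} This is the step I expect to be the main obstacle, since we only know convergence of the $p$-th moments. I would use the bounded continuous truncation $h_\ell(x)=\min(x,\ell)$.
  \begin{enumerate}[label=\arabic*.]
  \item Since $|X_n|^p\xrightarrow{P}|X|^p$ and $h_\ell$ is bounded and continuous, bounded convergence gives $\E h_\ell(|X_n|^p)\to\E h_\ell(|X|^p)$.
  \item Combined with (b), this yields $\E\bigl(|X_n|^p-h_\ell(|X_n|^p)\bigr)\to\E\bigl(|X|^p-h_\ell(|X|^p)\bigr)$.
  \item The right-hand side is small for $\ell$ large, because $|X|^p$ is integrable.
  \item Finally use $x\mathbf{1}_{\{x>2\ell\}}\leq 2(x-h_\ell(x))$. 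This controls the tails uniformly for all large $n$.
  \item The finitely many remaining indices are handled individually, each $|X_n|^p$ being integrable.
  \end{enumerate}
\end{itemize}

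The converse statement, that $L^p$ convergence implies convergence in probability, is Markov's inequality applied to $|X_n-X|^p$.

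For the last claim, let $X_n\to X$ in distribution with $(X_n)$ uniformly integrable. Then $\E h(X_n)\to\E h(X)$ for the bounded continuous clipping $h=\max(-\ell,\min(\cdot,\ell))$. The error $|\E X_n-\E h(X_n)|$ is bounded uniformly in $n$ by the uniform-integrability tail quantity. Fatou's lemma, via the Skorokhod representation or the continuity of $x\mapsto|x|\wedge\ell$, gives $\E|X|\leq\sup_n\E|X_n|<\infty$, so the same bound holds for $X$. Letting $\ell\to\infty$ concludes the argument.
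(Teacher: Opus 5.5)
Your proposal is correct and follows the paper's route. For (U.1) the paper just says it follows from the definition of uniform integrability, which is exactly your pointwise-domination argument; for (U.2) it merely cites Kallenberg's Lemma 3.11 and Proposition 3.12, whose standard proofs are the truncation and Vitali-type arguments you reproduce, so your version makes self-contained what the paper leaves to the reference.
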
 

\begin{lemmaproof}{\ref{lem:convpro}} Assume that the $\E(X_n)/\E(Y_n)$ converges to zero and let $\epsilon>0$. We want to show that $\pra{X_n/Y_n>\epsilon} \to 0$ as $n\to\infty$. Fix $\delta\in (0,1)$ and observe that
	$$Y_n<(1-\delta)\E(Y_n)\Longrightarrow \left|\frac{Y_n}{\E(Y_n)}-1\right|>\delta.$$
	Therefore
	\[
	\begin{split}
		\pra{\mydfrac{X_n}{Y_n}>\epsilon}&=\pra{\frac{X_n}{Y_n}>\epsilon\bigwedge Y_n\geq(1-\delta)\Ea{Y_n}}+\pra{\frac{X_n}{Y_n}>\epsilon\bigwedge Y_n<(1-\delta)\Ea{Y_n}}\\
		&\leq \pra{\frac{X_n}{Y_n}>\epsilon\bigwedge Y_n\geq(1-\delta)\Ea{Y_n}}+\pra{\left| \mydfrac{Y_n}{\Ea{Y_n}}-1\right|>\delta}\\
		&\leq \pra{\frac{X_n}{(1-\delta)\Ea{Y_n}}>\epsilon}+\pra{\left| \mydfrac{Y_n}{\Ea{Y_n}}-1\right|>\delta}\\
		&\leq \frac{\Ea{X_n}}{\epsilon(1-\delta)\Ea{Y_n}}+\pra{\left| \mydfrac{Y_n}{\Ea{Y_n}}-1\right|>\delta}, 
	\end{split}
	\]
	where, in the last inequality, we used Markov's inequality. Since both the addenda converge to zero---the first one by the assumption on the ratio of the expected values and the second one by the definition of convergence in probability, it follows that $X_n/Y_n$ converges in probability to zero, as required. Let us assume now $X_n/Y_n$ converges to zero in probability. Hence, for any $\epsilon>0$, $\pra{X_n/Y_n>\epsilon}\to 0$ as $n\to\infty$. We want to prove that $\Ea{X_n}/\Ea{Y_n}$ converges to zero as $n\to\infty$. Observe that 
	\[
	\begin{split}
		\mydfrac{\Ea{X_n}}{\Ea{Y_n}}&=\Ea{\mydfrac{X_n}{\Ea{Y_n}}}=\Ea{\mydfrac{X_n}{Y_n}\mydfrac{Y_n}{\Ea{Y_n}}}
	\end{split}
	\] 
	Let 
	$$U_n=\mydfrac{X_n}{Y_n}\mydfrac{Y_n}{\Ea{Y_n}}\quad\text{and}\quad W_n=\mydfrac{Y_n}{\Ea{Y_n}}.$$ 
	Since $X_n/Y_n\xrightarrow{P} 0$, $Y_n/\Ea{Y_n}\xrightarrow{P} 1$, it follows that $U_n\xrightarrow{P} 0$ by Slutsky's Lemma. Hence, to prove the thesis it suffices to show that $\Ea{U_n}\to 0$. To this end observe that the definition of $W_n$ and the assumption $0\leq X_n\leq \kappa Y_n$ imply
	$$\Ea{W_n}=1\quad\text{and}\quad0\leq U_n\leq \kappa W_n.$$
	Since $W_n\to 1$ in quadratic mean, it follows by \ref{com:u2} (with $p=2$) that $(W_n)_{n\geq 1}$ is uniformly integrable. On the other hand, by \ref{com:u1}, $(U_n)_{n\geq 1}$ is dominated by the uniformly integrable sequence $(\kappa W_n)_{n\geq 1}$. Hence, still by \ref{com:u1}, $(U_n)_{n\geq 1}$ is uniformly integrable. Moreover, since $U_n\xrightarrow{P} 0$ it follows that $\Ea{U_n}\to 0$ by the last part of \ref{com:u2}. This completes the proof of the first statement of the lemma. 
	
	To prove the remaining part let $T_n=X_n/\Ea{X_n}$ and assume we have proved that $(T^2_n)_{n\geq 1}$ is uniformly integrable. Since $T_n\geq 0$ it follows that $(T_n-1)^2\leq T^2_n+1$. Clearly, the constant sequence $(1)_{n\geq 1}$ is uniformly integrable. Hence $\left((T_n-1)^2\right)_{n\geq 1}$ is uniformly integrable by \ref{com:u1}. By the continuous mapping Theorem 
	$$T_n\xrightarrow{P} 1\Longrightarrow (T_n-1)^2\xrightarrow{P} 0.$$
	Since $(T_n-1)^2\xrightarrow{P} 0$ and $\left((T_n-1)^2\right)_{n\geq 1}$ is uniformly integrable, by the last part of \ref{com:u2}, using that convergence in probability implies convergence in distribution, it follows that the expected value of $(T_n-1)^2$ converges to the expected value of its limit in probability so that
	$$\mydfrac{\var{X_n}}{[\E(X_n)]^2}=\E\left(\mydfrac{X_n}{[\E(X_n)]^2}-1\right)^2=\E\left((T_n-1)^2\right)\to 0.$$
	This completes the proof of the lemma once we show that $(T^2_n)_{n\geq 1}$ is uniformly integrable. However this follows by \ref{com:u1} because 
	$$T^2_n=\left(\mydfrac{X_n}{\E(X_n)}\right)^2\leq \mydfrac{\kappa^2}{\tau^2}\left(\mydfrac{Y_n}{\E(Y_n)}\right)^2=\mydfrac{\kappa^2}{\tau^2}W^2_n$$
	and $(W^2_n)_{n\geq 1}$ is uniformly integrable because $W_n$ converges to 1 in quadratic mean, namely $W_n\to 1$ in $L^2$, and since $1$ is in $L^2$, the implication \ref{com:u2a}$\Rightarrow$\ref{com:u2c} applies. The proof is thus completed.  
\end{lemmaproof} 

\end{document}